\documentclass{article}

\usepackage[preprint]{neurips_2025}

\usepackage[utf8]{inputenc} % allow utf-8 input
\usepackage[T1]{fontenc}    % use 8-bit T1 fonts
\usepackage{xcolor}         % colors
\definecolor{neuripsblue}{rgb}{0.21,0.49,0.74}
\usepackage[pagebackref,breaklinks,colorlinks,allcolors=neuripsblue]{hyperref}
\usepackage{url}            % simple URL typesetting
\usepackage{booktabs}       % professional-quality tables
\usepackage{amsfonts}       % blackboard math symbols
\usepackage{nicefrac}       % compact symbols for 1/2, etc.
\usepackage{microtype}      % microtypography
\usepackage{dsfont}
\usepackage{amsmath,amssymb,amsthm}
\usepackage{algorithm}
\usepackage[noend]{algpseudocode}
\usepackage{graphicx}
\usepackage{multirow}
\usepackage{subcaption}
\usepackage{lipsum}
\usepackage{mathtools}

\newtheorem{theorem}{Theorem}[section]
\newtheorem{lemma}[theorem]{Lemma}
\newtheorem{proposition}[theorem]{Proposition}

\newtheorem{assumption}[theorem]{Assumption}

\allowdisplaybreaks

\algnewcommand{\LineComment}[1]{\State \textbf{//} \textit{#1}}
\usepackage{algpseudocode}
\algnewcommand{\Break}{\State \textbf{break}}

\newcommand{\R}{\mathbb{R}}

\newcommand{\ind}{\mathds{1}}

\DeclareMathOperator*{\argmin}{arg\,min}

\renewcommand{\H}{\mathcal{H}}
\newcommand{\B}{\mathcal{B}}
\newcommand{\D}{\mathrm{D}}

\newcommand{\RelErr}{\mathrm{RelErr}}
\newcommand{\dif}{\mathrm{d}}

\newcommand\inner[1]{\left\langle#1\right\rangle}
\newcommand\norm[1]{\left\lVert#1\right\rVert}
\newcommand{\render}{{R}}
\newcommand{\near}{\mathrm{near}}
\newcommand{\far}{\mathrm{far}}

\title{
    Functional Gradient Descent \\ with Adaptive Representations
}

\author{%
   Daniel Csillag$^{1}$\thanks{Corresponding author: Daniel Csillag (\texttt{daniel.csillag@fgv.br})} \quad Rodrigo Schuller$^2$ \quad Pedro Dall'Antonia$^1$ \\[0.2cm]
   \textbf{Leonidas Guibas}$^3$ \quad \textbf{Luiz Velho}$^{2}$\quad \textbf{Tiago Novello}$^{2,3}$ \\[0.2cm]
   $^{1}${\small FGV EMAp} \quad $^{2}${\small IMPA} \quad $^{3}${\small Stanford University} %
}

\begin{document}

\maketitle

\begin{abstract}
    Functional optimization problems are typically solved by optimizing the parameters of a fixed representation, such as a neural network,
    resulting in highly nonconvex losses that complicate both training and theoretical analysis.
    An interesting alternative is functional gradient descent (FGD), that is, gradient descent directly in function space, which benefits from strong convergence results and admits a clean theory.
    However, FGD is difficult to implement in practice because functional gradients are infinite-dimensional, and thus cannot be fully computed nor stored in memory.
    Existing implementations therefore rely on fixed approximations, which introduce approximation error.
    We propose a new, theoretically-grounded FGD algorithm that adapts the representation of the functional gradients over the course of optimization.
    By explicitly incorporating this approximation into the analysis, we establish convergence to a stationary point (for smooth losses) and to a global minimizer (under smoothness + a Polyak-Łojasiewicz-type condition) regardless of our approximations.
    To the best of our knowledge, this is the first implementable FGD method with such guarantees in a general setting.
    We demonstrate the effectiveness of our method on regression, numerical solution of PDEs, and modern computer vision. Across settings, our method consistently outperforms both FGD with fixed approximations and neural network baselines in efficiency and accuracy.
\end{abstract}

\section{Introduction}

Functional optimization problems are core to many areas, including machine learning, statistics, signal processing, scientific computing, and more.
In these settings, the objective is to optimize a \emph{function} over a function space to minimize some loss (e.g., mean squared error or cross-entropy).

To solve such problems, it is common to represent the target function as a neural network and optimize the loss over the network's weights through gradient descent.
While flexible, this introduces some challenges:
(i) the resulting optimization problem is highly nonconvex, making training costly;
(ii) the representation is typically fixed \textit{a priori}, limiting its ability to adapt during optimization; and
(iii) the nonconvex nature of the parametrized loss significantly complicates theoretical analyses of convergence and training dynamics.

We instead consider gradient descent directly in function space --- \emph{functional gradient descent} (FGD) --- whose dynamics are generally simpler than their parameterized counterparts and admit strong convergence guarantees.
Formally, consider the optimization problem
\begin{equation}\label{eq:optimization-problem}
    \argmin_{f \in \H} L(f),
\end{equation}
where $\H$ is a Hilbert space of functions (e.g., $L^2$, Sobolev spaces, or an RKHS\footnote{Reproducing Kernel Hilbert Space}).
Under suitable differentiability conditions, the (non-parametric) loss admits a well-defined functional gradient $\nabla L(h) \in \H$, allowing us to write the FGD updates
\begin{equation}\tag{Ideal FGD}\label{eq:ideal-fgd}
    f_{t+1} = f_t - \eta \nabla L(f_t).
\end{equation}
From a theoretical point of view, FGD is well known to benefit from good convergence guarantees (cf. e.g. \citep{book-bauschke,book-opt-hilbert}).
However, it is nontrivial to implement in practice.
Since the functional gradients reside in the infinite-dimensional space $\H$, they generally cannot be computed exactly nor stored in memory.
As a result, to actually implement FGD we must \emph{approximate} the gradients $\nabla L(f_t)$ via some finite-dimensional representation.
Yet using a fixed representation introduces approximation error, preventing convergence to a global minimizer.

In this paper, we propose a new theoretically-grounded functional gradient descent algorithm that adapts the representation of the functional gradients over the course of optimization.
We show that by adaptively refining the gradient approximations so as to ensure a certain bound on a relative error condition, we guarantee sufficient descent and thus proper convergence.
To the best of our knowledge, \textbf{ours is the first implementable FGD method that guarantees convergence to a global minimizer.}
The resulting procedure strictly outperforms functional gradient descent with any fixed representation, and simultaneously outperforms neural networks in both training speed and quality; see Figure~\ref{fig:teaser}.

\begin{figure}[t]
    \centering
    \includegraphics[width=\columnwidth]{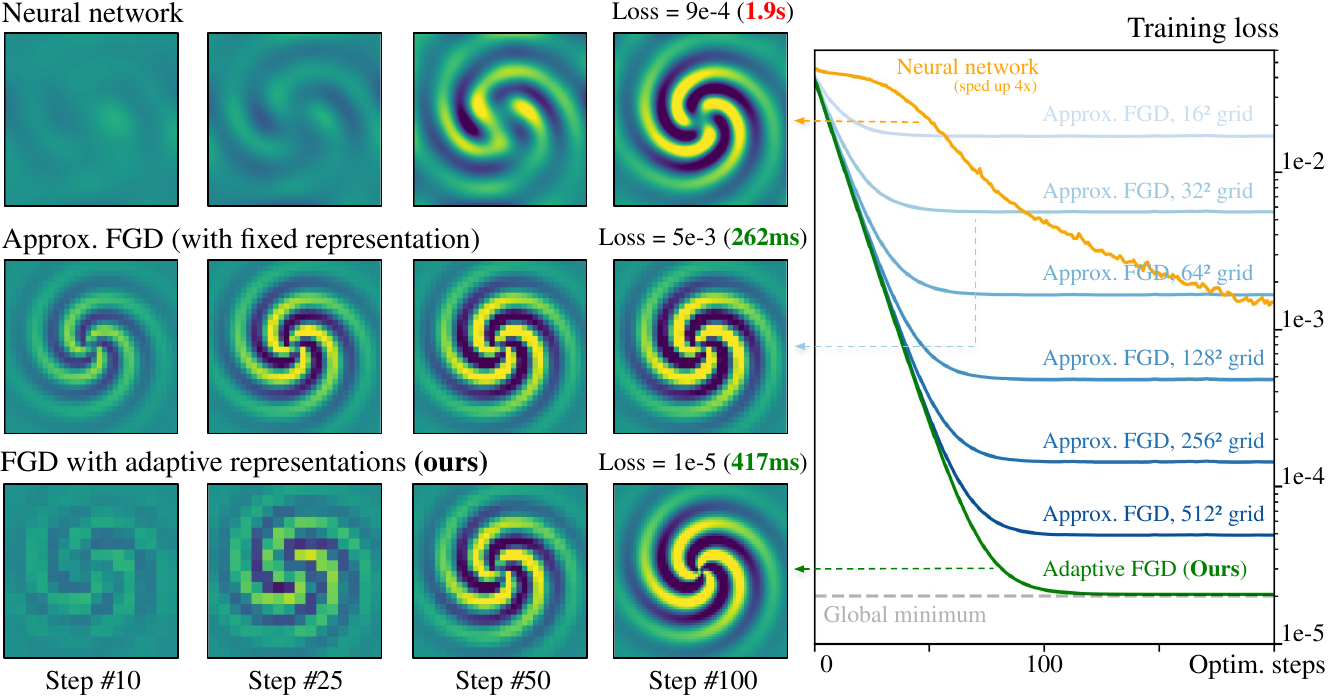}
\caption{\textbf{Our method on a toy MSE minimization task.} 
We compare adaptive FGD (\textcolor[rgb]{0.13,0.55,0.13}{ours}) with (i) \textcolor[rgb]{0.12,0.47,0.71}{approximate FGD} using fixed representations, which converge quickly but plateau at a resolution-dependent error, and (ii) a \textcolor[rgb]{1.0,0.5,0.06}{neural network} baseline trained with Adam, 
which converges more slowly. 
In contrast, our method starts from a coarse representation and automatically refines it, 
achieving faster convergence and reaching the global minimum.}
    \label{fig:teaser}
\end{figure}

Our \textbf{main contributions} are:

\begin{itemize}
    \item We establish sufficient conditions for the proper convergence of approximate FGD. Notably, our analysis covers the general setting where true gradients lie in a Hilbert space but are approximated within a broader Banach space, enabling important functional optimization tasks. We prove that maintaining a specific relative error bound guarantees convergence to a stationary point for smooth losses, and to a global minimizer under an additional Polyak-Łojasiewicz-type condition.
    \item We develop novel gradient approximation strategies that provably satisfy our derived relative error condition, using only computable quantities. This yields the first fully implementable FGD algorithms that retain strict, end-to-end convergence guarantees.
    \item We evaluate our method on three representative tasks across regression in an RKHS, numerical PDE solvers, and modern computer vision. On all settings, our adaptive approach consistently surpasses FGD using any fixed representation, while substantially outperforming neural network baselines in both training efficiency and solution quality.
\end{itemize}

\paragraph{Related work}
FGD is well-established as a theoretical procedure; see, e.g. \citep{book-opt-hilbert} for an overview of the convergence theory for exact FGD.
In specific cases, exact FGD is actually feasible to implement; for example, in the specific case of standard regression losses over an RKHS, the gradients are finite-dimensional and can therefore be exactly represented. This is akin to the classic representer theorems for kernel regression \citep{representer-theorems-wahba,representer-theorems-scholkopf}.
However, such approaches rely on specific finite-sample structures, do not generalize beyond simple regression settings, and typically scale poorly with sample size.
Alternatively, several works approximate functional gradients \citep{sip-functional,npiv,fgd-random-gradientfree,functional-bilevel,wasserstein-barycenter-fgd},
but introduce irreducible approximation error and often do not incorporate this approximation into their theoretical analysis.
Our work addresses this by (i) providing a general theory for approximate FGD, and (ii) identifying sufficient conditions that ensure convergence to proper minimizers sans approximation error.

Our work is also related to first-order optimization with inexact gradients \citep{inexact-gd-1,inexact-gd-2,inexact-gd-3}, but differs in that we operate in the infinite-dimensional setting, where many previous methods break down.
In particular, most approximation schemes (e.g. \citep{finite-diff-gd-1,book-opt-hilbert,finite-diff-gd-3}) do not extend to infinite dimensions.
Additionally, we consider a more general setting in which gradients lie in a Hilbert space but are approximated in a broader Banach space, beyond the scope of existing theory.

\section{Background: function spaces and functional gradients}\label{sec:background}

We briefly recall function spaces and differentiability in infinite dimensions.
A Hilbert space $\H$ is a complete inner product space, endowed with $\langle \cdot, \cdot \rangle_\H$ and induced norm $\lVert h \rVert_\H = \sqrt{\langle h, h \rangle_\H}$.
Similarly, a Banach space $\B$ is a complete normed space with norm $\lVert \cdot \rVert_\B$, but generally does not admit an inner product.
Every Banach space $\B$ admits a dual space $\B^*$, consisting of continuous linear functionals on $\B$, which is itself a Banach space under the dual norm $\lVert u \rVert_{\B^*} = \sup_{\lVert b \rVert_\B \leq 1} \lvert u(b) \rvert$.

Let $L : \H \to \R$ be a functional.
The directional derivative of $L$ at $f$ in the direction $\vec{f}$ is defined as 
\[ \D L(f; \vec{f}) := \lim_{\delta \searrow 0} \frac{L(f + \delta \vec{f}) - L(f)}{\delta}. \]
If $\D L$ exists for all $f, \vec{f}$ and is linear over $\vec{f}$, $L$ is Gâteaux differentiable;
additionally, if $\D L$ is continuous in $f$, then $L$ is Fréchet differentiable.\footnote{This is equivalent to the usual definition of Fréchet derivatives as bounded linear operators, \citep{book-intro-to-nonlinear-analysis}.}

When $L$ is Fréchet differentiable over a Hilbert space $\H$, for every $f \in \H$ the map $\D L(f; \cdot)$ (i.e., the functional $\vec{f} \mapsto \D L(f; \vec{f})$) is a continuous linear functional. Therefore, by the Riesz representation theorem, there exists a unique element $\nabla L(f) \in \H$ such that
\begin{align}\label{e-functional-gradient}
    \D L(f; \vec{f}) = \langle \nabla L(f), \vec{f} \rangle_\H \qquad\quad \text{for all } \vec{f} \in \H.
\end{align}
This element $\nabla L(f)$ is the \emph{functional gradient}, defining the gradient operator $\nabla L : \H \to \H$.
When $\H = \R^d$ with the standard inner product, this recovers the usual notion of gradients.

Finally, while Hilbert spaces provide a convenient structure for defining gradients, they can impose significant constraints on their elements.
For instance, the empirical MSE functional
\[ L_\mathrm{MSE}(f) = \frac{1}{n} \sum_{i=1}^n \bigl( f(x_i) - y_i \bigr)^2\]
is not well-defined on $L^2(\R)$ (space of square-integrable functions), since functions in $L^2(\R)$ cannot be evaluated on a set of measure zero.
In contrast, it is well-defined in RKHSs, which enforce additional regularity but are more restrictive than $L^2(\R)$.

Nevertheless, these issues arise due to the requirement of a proper inner product. Once we move from Hilbert to Banach spaces, we can use flexible function spaces such as $L^\infty$ or the space of continuous functions, without caveats.

\section{Method}

\begin{algorithm}[t]
    \caption{Functional gradient descent with adaptive representations}\label{alg:main-algorithm}
    \begin{algorithmic}
        \Require loss $L : \H \to \R$, learning rate $\eta$, initial condition $f_0$, tolerance $\epsilon$
        \State Choose an initial representation
        \For{$t = 0, 1, \ldots, T-1$}
        \vspace{0.1cm}
            \LineComment{Approximate the functional gradient}
            \While{true}
                \State $g_t \hspace{2pt}\gets \text{approximate $\nabla L(f_t)$ using the current representation}$
                \State $U_t \gets \text{tight upper bound on $\lVert g_t - \nabla L(f_t) \rVert_\B$}$
             \State \textbf{if} $(1+\epsilon) U_t < \epsilon \lVert g_t \rVert_\B$ \textbf{then break}
                \State Refine the current representation
            \EndWhile
            \vspace{0.2cm}
            \LineComment{Do the gradient descent step}
            \State $f_{t+1} \gets f_t - \eta g_t$
        \EndFor
        \State \Return $f_T$
    \end{algorithmic}
\end{algorithm}

Let $\H$ be a Hilbert space contained in a Banach space $\B$,
and recall the optimization problem \eqref{eq:optimization-problem}
\[ \argmin_{f \in \H} L(f), \]
where $L : \H \to \mathbb{R}$ is taken to be a Fréchet differentiable functional on $\H$.
Since $\H$ is a Hilbert space, the derivative of $L$ admits a gradient operator $\nabla L : \H \to \H$ (cf. Section~\ref{sec:background}).
An ideal functional gradient descent (FGD) scheme would, therefore, perform the steps
\[ f^\mathrm{ideal}_{t+1} = f^\mathrm{ideal}_t - \eta \nabla L(f^\mathrm{ideal}_t), \]
which remain strictly within $\H$ and whose convergence follows by the standard proofs of convergence of FGD.
However, since this is intractable to implement, we instead replace the gradient by an \emph{approximate gradient} $g_t$, yielding updates
\begin{equation}\label{eq:our-updates}
    f_{t+1} = f_t - \eta g_t,
\end{equation}
where $g_t \in \B$ (and thus the iterates $f_t$ also reside in $\B$).
We will show that as long as $g_t$ approximates $\nabla L(f_t)$ reasonably well (in a sense we will make precise), we still guarantee descent and proper convergence.
Subsequently, we shall study strategies to guarantee that the required approximation conditions hold in practice by adaptively refining the gradient representations.

\subsection{Convergence}\label{sec:method-convergence}

In order to analyze the convergence of the updates in \autoref{eq:our-updates}, we must introduce some assumptions.
First, note that loss and gradient operator are defined over the Hilbert space $\H$, but our updates will reside in the larger Banach space $\B$;
therefore, in order to keep things well-defined, our first assumption is that both the loss $L$ and the gradient operator $\nabla L$ have extensions from $\H$ onto $\B$:
\begin{assumption}[Extension]\label{assumpt:continuous-extension}
    There exists a Fréchet differentiable functional $\overline{L} : \B \to \R$ and an operator $\overline{\nabla L} : \B \to \B$ such that, for any $f \in \H$,
    \[ \overline{L}(f) = L(f) \qquad\text{and}\qquad \overline{\nabla L}(f) = \nabla L(f). \]
\end{assumption}
For convenience, from here on we will slightly abuse notation and ``shadow'' $L$ and $\nabla L$ by their extended counterparts; i.e., for some $f \in \B$ we shall write $L(f)$ instead of $\overline{L}(f)$, and similarly we shall write $\nabla L(f)$ instead of $\overline{\nabla L}(f)$.

We will additionally rely on the standard assumptions used for the analysis of gradient descent, albeit over the larger Banach space $\B$ rather than the usual Hilbert $\H$.
In particular, we will assume that the loss is $K$-smooth:
\begin{assumption}[$K$-smoothness]\label{assumpt:smoothness}
    For any $f, f' \in \B$, it holds that
    \[ L(f) \leq L(f') + \D L(f'; f - f') + \frac{K}{2} \lVert f - f' \rVert_\B^2. \]
\end{assumption}
Note that we write the $K$-smoothness bound in terms of the directional derivative $\D L$ rather than an inner product with the gradient, as the Banach space does not generally have an inner product.

We will also require two additional assumptions, which serve to connect the gradients between the Banach space $\B$ and the Hilbert space $\H$:
\begin{assumption}[$\H$ descends in $\B$]\label{assumpt:h-descends-in-b}
    There exists some constant $\alpha>0$ such that, for all $f \in \B$,
    \[ \D L(f; \nabla L(f)) \geq \alpha \lVert \nabla L(f) \rVert_\B^2. \]
\end{assumption}
\begin{assumption}[Gradient compatibility]\label{assumpt:grad-compat}
    There exists a constant $\beta > 0$ such that, for every $f \in \B$,
    \[ \lVert \D L(f; \cdot) \rVert_{\B^*} \leq \beta \lVert \nabla L(f) \rVert_{\B}. \]
\end{assumption}
When $\B = \H$ both of these assumptions are immediately satisfied with $\alpha = \beta = 1$: $\D L(f; \nabla L(f)) = \langle \nabla L(f), \nabla L(f) \rangle_\H = \lVert \nabla L(f) \rVert_\H^2$, and $\lVert \D L(f; \cdot) \rVert_{\B^*} = \lVert \nabla L(f) \rVert_{\B}$.
But these assumptions are more general, and satisfied in some important cases where $\H \neq \B$ (cf. Section~\ref{sec:exp1}).
Intuitively, Assumption~\ref{assumpt:h-descends-in-b} asserts that the direction given by the extended gradient $\nabla L(f)$ is guaranteed to be a direction of sufficient descent per the geometry of $\B$. Assumption~\ref{assumpt:grad-compat}, on the other hand, establishes a sort of local equivalence of norms over the gradients (but not over the whole space).

With these in hand, we can analyze a single step of our method (Equation~\ref{eq:our-updates}) to establish a sufficient descent lemma.
The following result follows from the $K$-smoothness inequality (Assumption~\ref{assumpt:smoothness}), along with our Assumptions~\ref{assumpt:h-descends-in-b} and \ref{assumpt:grad-compat} and the use of triangular inequalities:

\begin{lemma}[Sufficient descent lemma]\label{lemma:sufficient-descent-lemma}
    Let $f_t, f_{t+1}$ be as in \eqref{eq:our-updates}.
    Suppose $L$ and $\nabla L$ admit extensions satisfying $K$-smoothness, gradient compatibility and that $\mathcal{H}$ descends in $\mathcal{B}$ (Assumptions~\ref{assumpt:continuous-extension}-\ref{assumpt:grad-compat}).
    Then, for any learning rate $\eta > 0$ and step $t$ with $\RelErr(g_t, \nabla L(f_t)) \leq 1/2$,
    \begin{align*}
        L(f_{t+1})
        \leq L(f_t) - \eta \left[ \alpha - \frac{K \eta}{2} - \left( \beta + \frac{3}{2} K \eta \right) \frac{\RelErr(g_t, \nabla L(f_t))}{1-\RelErr(g_t, \nabla L(f_t))} \right] \lVert \nabla L(f_t) \rVert_\B^2,
    \end{align*}
    where
    \begin{equation}
        \RelErr(g, \nabla L(f)) := \frac{\lVert g - \nabla L(f) \rVert_\B}{\lVert g \rVert_\B}.
    \end{equation}
\end{lemma}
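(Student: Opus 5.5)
Write $\nabla_t := \nabla L(f_t)$, $e_t := g_t - \nabla_t$, and $r := \RelErr(g_t, \nabla_t) = \lVert e_t\rVert_\B/\lVert g_t\rVert_\B$. The plan is to apply $K$-smoothness (Assumption~\ref{assumpt:smoothness}) with $f = f_{t+1}$ and $f' = f_t$. This gives
\[ L(f_{t+1}) \leq L(f_t) - \eta\, \D L(f_t; g_t) + \frac{K\eta^2}{2}\lVert g_t\rVert_\B^2, \]
where we used linearity of $\D L(f_t;\cdot)$ (Fréchet differentiability of the extension, Assumption~\ref{assumpt:continuous-extension}) to pull out $-\eta$. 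Every remaining term will then be expressed through $\lVert \nabla_t\rVert_\B^2$.

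\textbf{Step 1: convert the relative error into a bound in terms of $\lVert\nabla_t\rVert_\B$.} By the triangle inequality, $\lVert g_t\rVert_\B \leq \lVert \nabla_t\rVert_\B + \lVert e_t\rVert_\B = \lVert \nabla_t\rVert_\B + r\lVert g_t\rVert_\B$. Since $r\le 1/2<1$, this gives $\lVert g_t\rVert_\B \leq \lVert\nabla_t\rVert_\B/(1-r)$. Consequently, with $s := r/(1-r)$,
\[ \lVert e_t\rVert_\B \leq s\,\lVert \nabla_t\rVert_\B. \]

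\textbf{Step 2: bound the linear term.} Split it as $\D L(f_t;g_t) = \D L(f_t;\nabla_t) + \D L(f_t;e_t)$. For the first piece, Assumption~\ref{assumpt:h-descends-in-b} gives a lower bound of $\alpha\lVert\nabla_t\rVert_\B^2$. For the second, the dual norm and Assumption~\ref{assumpt:grad-compat} give $|\D L(f_t;e_t)| \le \beta\lVert\nabla_t\rVert_\B\lVert e_t\rVert_\B \le \beta s\lVert\nabla_t\rVert_\B^2$. Hence
\[ -\eta\,\D L(f_t;g_t) \le -\eta(\alpha - \beta s)\lVert\nabla_t\rVert_\B^2. \]

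\textbf{Step 3: bound the quadratic term.} This is the step where the constants have to come out right. Using $\lVert g_t\rVert_\B \le (1+s)\lVert\nabla_t\rVert_\B$, which is the same as $1/(1-r)$, we get
\[ \lVert g_t\rVert_\B^2 \le (1+s)^2\lVert\nabla_t\rVert_\B^2 = (1+2s+s^2)\lVert\nabla_t\rVert_\B^2. \]
The target coefficient is $1+3s$, so we need $s^2\le s$, i.e.\ $s\le 1$. This is exactly guaranteed by the hypothesis $r\le1/2$, which is why that hypothesis appears. So $\frac{K\eta^2}{2}\lVert g_t\rVert_\B^2 \le \frac{K\eta^2}{2}(1+3s)\lVert\nabla_t\rVert_\B^2$. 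Note that $(1+2s+s^2)\le 1+3s$, so this step is slightly loose but matches the stated constant $\frac32 K\eta$.

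\textbf{Conclusion.} Summing the bounds from Steps 2 and 3 and factoring out $-\eta\lVert\nabla_t\rVert_\B^2$ yields the bracket
\[ \alpha - \frac{K\eta}{2} - \Bigl(\beta + \frac32 K\eta\Bigr)\frac{r}{1-r}, \]
which is exactly the claimed inequality.
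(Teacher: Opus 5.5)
Your proof is correct and follows essentially the same route as the paper. Both proofs apply $K$-smoothness, then handle the linear term with Assumption~\ref{assumpt:h-descends-in-b} and the dual-norm bound from gradient compatibility, bound $\lVert g_t\rVert_\B \le \lVert \nabla L(f_t)\rVert_\B + \lVert e_t\rVert_\B$, and use $s^2 \le s$ (guaranteed by $\RelErr \le 1/2$) to reach the $\frac{3}{2}K\eta$ constant. The only difference is cosmetic: you convert to $\lVert e_t\rVert_\B \le \frac{r}{1-r}\lVert \nabla L(f_t)\rVert_\B$ at the start rather than at the end, which also avoids dividing by $\lVert \nabla L(f_t)\rVert_\B$.
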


Note that this bound is tight, in the sense that when $\H = \B$, $g_t = \nabla L(f_t)$, and $\eta = 1/K$, we recover the usual descent lemma $L(f_t - \eta g_t) \leq L(f_t) - \frac{\eta}{2} \lVert \nabla L(f_t) \rVert_\B^2$ (cf. e.g. \citep{optimization-book}).

As long as the term within the brackets is strictly positive, we guarantee that a step of approximate functional gradient descent decreases the loss in a manner proportional to the current gradient value.
By telescoping and rearranging we obtain:

\begin{proposition}[Convergence to a stationary point]\label{prop:convergence-to-stationary-point}
    Under the setting of Lemma~\ref{lemma:sufficient-descent-lemma}, furthermore assume that the loss is lower bounded by $L^\star$,
    and that the relative error is uniformly bounded by a constant $\epsilon < \min \{ 1/2, \alpha/(\alpha+\beta) \}$ throughout iterations:
    \[ \RelErr(g_t, \nabla L(f_t)) \leq \epsilon \qquad\qquad \text{for all iterations } t = 0, \ldots, T-1. \]
    Then, for any learning rate $\eta \leq 2 (\alpha - (\alpha + \beta) \epsilon) / K ( 2\epsilon + 1 )$,
    \[ \min_{t = 0, \ldots, T-1} \lVert \nabla L(f_t) \rVert_\B^2 \leq \frac{L(f_0) - L^\star}{T \eta r}, \]
    where $r>0$ is a constant defined in the proof.
\end{proposition}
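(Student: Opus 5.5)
We apply Lemma~\ref{lemma:sufficient-descent-lemma} at every step, bound the bracketed coefficient from below by a single positive constant $r$, and telescope. Since $\epsilon<1/2$, the hypothesis $\RelErr(g_t,\nabla L(f_t))\le 1/2$ of the lemma holds for all $t$. The map $x\mapsto x/(1-x)$ is increasing on $[0,1)$, and $\beta+\tfrac32K\eta>0$. Hence the bracket is bounded below uniformly in $t$ by
\[
r := \alpha - \frac{K\eta}{2} - \Bigl(\beta + \frac{3}{2}K\eta\Bigr)\frac{\epsilon}{1-\epsilon}.
\]
The descent lemma then gives $L(f_{t+1}) \le L(f_t) - \eta r \lVert \nabla L(f_t)\rVert_\B^2$ for every $t=0,\dots,T-1$.

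The main step is to show $r>0$ under the stated conditions on $\epsilon$ and $\eta$. Multiplying by $1-\epsilon>0$, $r>0$ is equivalent to
\[
\alpha(1-\epsilon) - \beta\epsilon - \frac{K\eta}{2}(1-\epsilon) - \frac{3}{2}K\eta\,\epsilon > 0,
\]
that is, $\alpha-(\alpha+\beta)\epsilon > \frac{K\eta}{2}(1+2\epsilon)$. The left side is positive because $\epsilon<\alpha/(\alpha+\beta)$. The inequality then says exactly $\eta < 2(\alpha-(\alpha+\beta)\epsilon)/(K(2\epsilon+1))$, which matches the stated learning-rate bound.

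This is the one delicate point. At equality in the learning-rate bound we get $r=0$, and the conclusion degenerates. I would handle this either by reading the bound as strict, or by defining $r$ as above and noting that the claimed inequality is only meaningful when $r>0$, i.e.\ for $\eta$ strictly below the threshold. The simplest version is to assume strict inequality.

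With $r>0$ in hand, summing the per-step inequalities over $t=0,\dots,T-1$ telescopes to
\[
\eta r\sum_{t=0}^{T-1}\lVert\nabla L(f_t)\rVert_\B^2 \le L(f_0)-L(f_T) \le L(f_0)-L^\star,
\]
where the last step uses the lower bound $L(f_T)\ge L^\star$; this presumes $L^\star$ bounds the extended loss on $\B$, since the iterates live in $\B$. Bounding the minimum by the average gives $T\min_t\lVert\nabla L(f_t)\rVert_\B^2 \le (L(f_0)-L^\star)/(\eta r)$, which is the claim.
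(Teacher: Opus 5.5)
Your proof is correct and follows the paper's argument essentially step for step. You apply the descent lemma using $\RelErr \le \epsilon < 1/2$, lower-bound the bracket by the same constant $r$ via monotonicity of $x \mapsto x/(1-x)$, telescope against $L^\star$, and verify $r>0$ with the same algebra. Your remark about the learning-rate condition is also right: the paper's own computation shows $r>0$ if and only if $\eta < 2(\alpha-(\alpha+\beta)\epsilon)/(K(2\epsilon+1))$, so the non-strict inequality in the statement should be read as strict, as it is in the global-convergence proposition.
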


Under additional structural assumptions we are able to furthermore establish convergence to the proper global minimizer.
For our purposes, we will work under a Polyak-Łojasiewicz-type assumption, which serves as a generalization of strong convexity:

\begin{assumption}[Polyak-Łojasiewicz condition on $\H$ over $\B$]\label{assumpt:polyak-lojasiewicz}
    The extended Fréchet-differentiable functional $L : \B \to \R$ is $\mu$-Polyak-Łojasiewicz if, for any $f \in \B$,
    \[ L(f) - L^\star \leq \frac{1}{2\mu} \lVert \D L(f; \cdot) \rVert_{\B^*}^2\,,\quad
    \text{where}\quad  L^\star = \inf_{f' \in \H} L(f').\]
\end{assumption}

It then holds:

\begin{proposition}[Global convergence]\label{prop:convergence-to-global-minimizer}
    Under the conditions of Proposition~\ref{prop:convergence-to-stationary-point}, further assume that $L$ is $\mu$-Polyak-Łojasiewicz (Assumption~\ref{assumpt:polyak-lojasiewicz}).
    Then, for any learning rate $\eta < 2 (\alpha - (\alpha + \beta) \epsilon) / K (2\epsilon + 1)$, performing $T$ iterations of approximate functional gradient descent guarantees that
    \begin{align*}
        L(f_T) - L^\star
        \leq [1 - 2 \eta \beta^{-2} \mu r]^T \, \bigl( L(f_0) - L^\star \bigr).
    \end{align*}
\end{proposition}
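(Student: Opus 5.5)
The plan is to chain the per-step sufficient descent of Lemma~\ref{lemma:sufficient-descent-lemma} with the Polyak-Łojasiewicz inequality (Assumption~\ref{assumpt:polyak-lojasiewicz}), using gradient compatibility (Assumption~\ref{assumpt:grad-compat}) to pass from the $\B$-norm of the gradient to the dual norm of the derivative. The result is a linear contraction of the suboptimality gap, which we then iterate.

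\textbf{Step 1: per-step descent.} Since $\RelErr(g_t,\nabla L(f_t)) \le \epsilon < 1/2$ and $x \mapsto x/(1-x)$ is increasing on $[0,1)$, the bracket in Lemma~\ref{lemma:sufficient-descent-lemma} is bounded below by
\[ \alpha - \frac{K\eta}{2} - \Bigl(\beta + \frac{3}{2}K\eta\Bigr)\frac{\epsilon}{1-\epsilon}. \]
Following the proof of Proposition~\ref{prop:convergence-to-stationary-point}, call $r$ the positive constant that lower bounds this bracket; positivity is exactly what the learning-rate condition $\eta < 2(\alpha-(\alpha+\beta)\epsilon)/K(2\epsilon+1)$ ensures. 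Concretely, multiplying by $1-\epsilon$ gives
\[ \alpha - (\alpha+\beta)\epsilon - \frac{K\eta}{2}(1+2\epsilon), \]
which is strictly positive under the strict inequality on $\eta$. We therefore obtain
\[ L(f_{t+1}) \le L(f_t) - \eta r \lVert \nabla L(f_t)\rVert_\B^2 . \]

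\textbf{Step 2: lower bound on the gradient.} By Assumption~\ref{assumpt:grad-compat}, $\lVert \nabla L(f_t)\rVert_\B^2 \ge \beta^{-2}\lVert \D L(f_t;\cdot)\rVert_{\B^*}^2$. By Assumption~\ref{assumpt:polyak-lojasiewicz}, which applies because $f_t \in \B$, this is at least $2\mu\beta^{-2}\bigl(L(f_t)-L^\star\bigr)$. Substituting into Step 1 and subtracting $L^\star$ from both sides yields
\[ L(f_{t+1})-L^\star \le \bigl(1-2\eta\beta^{-2}\mu r\bigr)\bigl(L(f_t)-L^\star\bigr). \]

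\textbf{Step 3: induction.} Iterating over $t=0,\dots,T-1$ gives the claim. For this we need the factor to be nonnegative, and the gaps $L(f_t)-L^\star$ to be nonnegative.

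\textbf{Main obstacle.} The delicate point is this nonnegativity, because $L^\star$ is an infimum over $\H$ while the iterates live in $\B$.
\begin{itemize}
\item \emph{Gaps.} If some $L(f_t)<L^\star$, the gap is already negative. The inequality $L(f_{t+1})\le L(f_t)$ then keeps it negative, so the bound holds trivially, provided the contraction factor is nonnegative.
\item \emph{Contraction factor.} Nonnegativity follows from chaining the inequalities of Step 2 whenever $L(f_t)>L^\star$: the new gap is bounded below by $0$ only if $L^\star$ is also a lower bound on $\B$. So the cleanest route is to note that $1-2\eta\beta^{-2}\mu r\ge 0$ is implied by the combination of the descent inequality and the Polyak-Łojasiewicz inequality, since the loss is bounded below by $L^\star$ as assumed in Proposition~\ref{prop:convergence-to-stationary-point}.
\item \emph{Fallback.} Failing that, I would state the bound with the positive part of the factor, which still yields the claimed geometric rate.
\end{itemize}
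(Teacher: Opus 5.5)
Your proposal is correct and follows the paper's proof essentially step for step: the sufficient descent lemma with $\RelErr(g_t,\nabla L(f_t)) \le \epsilon$ gives the decrement $-\eta r \lVert \nabla L(f_t) \rVert_\B^2$, gradient compatibility and the Polyak-Łojasiewicz inequality turn it into $-2\eta\beta^{-2}\mu r\,(L(f_t)-L^\star)$, and the resulting one-step contraction is chained over $t$. You also note that the chaining needs $1-2\eta\beta^{-2}\mu r \ge 0$, which the paper's proof does not address, and your resolution is sound: if $L \ge L^\star$ on $\B$ and some gap $L(f_t)-L^\star$ is positive, then $0 \le L(f_{t+1})-L^\star \le (1-2\eta\beta^{-2}\mu r)(L(f_t)-L^\star)$ forces the factor to be nonnegative (if no gap is positive, all gaps vanish and the bound is trivial), and once the factor is nonnegative the signs of the gaps play no role in the induction.
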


\subsection{Adaptive representations of functional gradients}\label{sec:method-fitting}

In the previous section we have established that as long as we can ensure an upper bound on the relative error, we guarantee proper convergence.
In this section, we now consider how exactly we can approximate the functional gradient so as to guarantee this will be the case.

Recall that our ultimate objective is to find some gradient approximation $g$ that ensures that the relative error is bounded by some fixed $\epsilon$:
\[ \RelErr(g, \nabla L(f)) = \frac{\lVert g - \nabla L(f) \rVert_\B}{\lVert g \rVert_\B} \leq \epsilon. \]
Note that while the denominator can generally be computed exactly (as it is the norm of our own approximation), the approximation error in the numerator can still be tricky. However, it is often possible to compute a (reasonably tight) upper bound $U(g, \nabla L(f))$ on $\lVert g - \nabla L(f) \rVert_\B$, in the sense that\looseness=-1
\begin{equation}\label{eq:tight-approx-error-bound}
    \begin{aligned}
        \lVert g - \nabla L(f) \rVert_\B &\leq U(g, \nabla L(f)) &\text{for all $g, f \in \B$}.
    \end{aligned}
\end{equation}
With such an upper bound in hand, we can guarantee:
\begin{lemma}\label{lemma:approx-error-limit}
    Given some $f \in \B$ and desired tolerance $\epsilon>0$,
    let $U(\cdot, \cdot)$ be an upper bound on the approximation error (as in \eqref{eq:tight-approx-error-bound}), and
    let $(g_1, g_2, \ldots)$ be a sequence of gradient approximations in $\B$ such that $U(g_n, \nabla L(f)) \to 0$.
    Then there exists some $n$ such that
    \begin{equation}\label{eq:relerr-condition}
        \frac{\RelErr(g_n, \nabla L(f))}{1 - \RelErr(g_n, \nabla L(f))} \leq \epsilon.
    \end{equation}
\end{lemma}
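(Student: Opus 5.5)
The plan is to show that the computable upper bounds $U_n := U(g_n, \nabla L(f))$ force $g_n \to \nabla L(f)$ in $\B$, so that the relative error eventually becomes arbitrarily small. The condition \eqref{eq:relerr-condition} then holds for all large $n$, because $x \mapsto x/(1-x)$ is continuous and increasing on $[0,1)$ and vanishes at $0$.

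\textbf{Step 1.} By \eqref{eq:tight-approx-error-bound}, $\lVert g_n - \nabla L(f) \rVert_\B \leq U_n \to 0$, so $g_n \to \nabla L(f)$ in $\B$. By the reverse triangle inequality, $\bigl| \lVert g_n \rVert_\B - \lVert \nabla L(f) \rVert_\B \bigr| \leq U_n$, hence $\lVert g_n \rVert_\B \to \lVert \nabla L(f) \rVert_\B$.

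\textbf{Step 2.} Assume $\nabla L(f) \neq 0$. Then for all $n$ large enough, $\lVert g_n \rVert_\B \geq \tfrac12 \lVert \nabla L(f) \rVert_\B > 0$, and so
\[ \RelErr(g_n, \nabla L(f)) \leq \frac{U_n}{\lVert g_n \rVert_\B} \leq \frac{2 U_n}{\lVert \nabla L(f) \rVert_\B} \to 0 . \]

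\textbf{Step 3.} Set $\delta := \epsilon/(1+\epsilon) \in (0,1)$. For $x \in [0,1)$ we have $x/(1-x) \leq \epsilon$ if and only if $x \leq \delta$. Step 2 gives some $n$ with $\RelErr(g_n, \nabla L(f)) \leq \delta$, and this $n$ satisfies \eqref{eq:relerr-condition}. Since $\RelErr \leq U_n/\lVert g_n \rVert_\B$, it is in fact enough that $(1+\epsilon) U_n \leq \epsilon \lVert g_n \rVert_\B$. This is precisely the computable stopping test in Algorithm~\ref{alg:main-algorithm}. The same argument therefore also shows that the inner refinement loop terminates, because $U_n \to 0$ while $\lVert g_n \rVert_\B$ stays bounded away from zero.

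\textbf{Main obstacle: the degenerate case $\nabla L(f) = 0$.} If $\nabla L(f) = 0$, then $\RelErr(g_n, 0) = 1$ whenever $g_n \neq 0$, and the ratio is undefined when $g_n = 0$. So the statement cannot hold literally in this case, and I would handle it by one of two routes. The first is to state explicitly that the lemma assumes $f$ is not a stationary point. This is harmless: if $\nabla L(f) = 0$, the iteration has already reached a stationary (and, under Assumption~\ref{assumpt:polyak-lojasiewicz}, optimal) point, and the conclusions of Propositions~\ref{prop:convergence-to-stationary-point} and \ref{prop:convergence-to-global-minimizer} hold trivially from that iterate on. The second is to note that in this case the update $f - \eta g_n$ differs from $f$ by at most $\eta U_n \to 0$. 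The rest of the argument is routine once nondegeneracy is assumed.
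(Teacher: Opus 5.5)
Your proposal is correct and follows the same route as the paper's proof. In both, the bound $U$ forces $g_n \to \nabla L(f)$ in $\B$, hence $\RelErr(g_n,\nabla L(f)) \to 0$, hence the ratio is eventually at most $\epsilon$. You make this quantitative through $\RelErr(g_n,\nabla L(f)) \le 2U_n/\lVert \nabla L(f)\rVert_\B$ and the equivalence $x/(1-x)\le\epsilon \iff x \le \epsilon/(1+\epsilon)$ on $[0,1)$.

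Your explicit assumption $\nabla L(f)\neq 0$ is well placed. The paper's \emph{by continuity} step divides by $\lim_n \lVert g_n\rVert_\B = \lVert \nabla L(f)\rVert_\B$, so it tacitly needs this assumption too. Without it, $\RelErr(g_n,0)=1$ whenever $g_n\neq 0$, and the conclusion fails.
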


The result in Lemma~\ref{lemma:approx-error-limit} is quite general.
Importantly, it suggests a concrete algorithm: at each FGD step we design a sequence of representations, and loop over this sequence; at each iterate, approximate the gradient using the current representation and compute the upper bound on the approximation error. As soon as Equation~\ref{eq:relerr-condition} is satisfied (which must happen at some point, as per Lemma~\ref{lemma:approx-error-limit}), we use the current approximation.
Algorithm~\ref{alg:main-algorithm} gives the overall procedure.

Connecting with our previous convergence theory, we obtain:

\begin{theorem}\label{thm:main-theorem}
    Let $f_t$ be the iterates produced throughout Algorithm~\ref{alg:main-algorithm}, with $\epsilon \in (0, 1)$. Then:
    \begin{enumerate}
        \item[(i)] \textbf{Relative error bound:}
            For all iterations $t = 1, \ldots, T$,
            \[ \RelErr( g_t, \nabla L(f_t) ) \leq \frac{\epsilon}{1+\epsilon} < 1/2; \]
        \item[(ii)] \textbf{Convergence to a stationary point:}
            Under the conditions of Proposition~\ref{prop:convergence-to-stationary-point},
            \[ \min_{t = 0, \ldots, T-1} \lVert \nabla L(f_t) \rVert_\B^2 \leq \frac{L(f_0) - L^\star}{T \eta r}, \]
            where $r>0$ is a constant defined in the proof; and
        \item[(iii)] \textbf{Convergence to a global minimizer:}
            Under the conditions of Proposition~\ref{prop:convergence-to-global-minimizer},
            \[
                L(f_T) - L^\star \leq [1 - 2 \eta \beta^{-2} \mu r]^T \, \bigl( L(f_0) - L^\star \bigr).
            \]
    \end{enumerate}
\end{theorem}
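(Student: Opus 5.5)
The plan is to prove (i) directly from the stopping rule of Algorithm~\ref{alg:main-algorithm}. Parts (ii) and (iii) then follow by plugging (i) into Propositions~\ref{prop:convergence-to-stationary-point} and \ref{prop:convergence-to-global-minimizer}.

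For (i), I first argue that each inner \texttt{while} loop terminates. Fix $t$ and suppose the refinement sequence is such that $U(g_n,\nabla L(f_t))\to 0$. Termination then follows from the argument of Lemma~\ref{lemma:approx-error-limit}, but I need to check the exact break condition $(1+\epsilon)U_t<\epsilon\lVert g_t\rVert_\B$. Since $\lVert g_n - \nabla L(f_t)\rVert_\B\le U_n\to 0$, we have $g_n\to\nabla L(f_t)$. Two cases arise:
\begin{itemize}
\item If $\nabla L(f_t)\neq 0$, then $\lVert g_n\rVert_\B$ tends to a positive limit while $U_n\to0$, so the strict inequality eventually holds.
\item If $\nabla L(f_t)=0$, the iterate is already stationary. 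I would treat this degenerate case separately: the algorithm may stop, or the conclusions hold trivially with a zero step.
\end{itemize}

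Once the loop breaks, $\lVert g_t-\nabla L(f_t)\rVert_\B\le U_t<\frac{\epsilon}{1+\epsilon}\lVert g_t\rVert_\B$. In particular $\lVert g_t\rVert_\B>0$, and dividing gives $\RelErr(g_t,\nabla L(f_t))\le \epsilon/(1+\epsilon)$. Because $\epsilon<1$, we get $\epsilon/(1+\epsilon)<1/2$.

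I also record the equivalent form used by the earlier results. The map $x\mapsto x/(1-x)$ is increasing on $[0,1)$. Hence $\RelErr\le\epsilon/(1+\epsilon)$ is equivalent to
\[\frac{\RelErr(g_t,\nabla L(f_t))}{1-\RelErr(g_t,\nabla L(f_t))}\le\epsilon,\]
which is condition \eqref{eq:relerr-condition}.

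For (ii), I invoke Proposition~\ref{prop:convergence-to-stationary-point}. Its hypotheses require the relative error to be uniformly bounded by some $\epsilon'<\min\{1/2,\alpha/(\alpha+\beta)\}$ for all $t$, with the stated learning-rate restriction. By (i) this holds with $\epsilon'=\epsilon/(1+\epsilon)$, and the constraint $\epsilon'<\alpha/(\alpha+\beta)$ is part of "the conditions of Proposition~\ref{prop:convergence-to-stationary-point}". The cleanest route, though, is to redo the telescoping directly from Lemma~\ref{lemma:sufficient-descent-lemma} using the sharper form $\RelErr/(1-\RelErr)\le\epsilon$. The bracket is then at least
\[ r:=\alpha-\frac{K\eta}{2}-\Bigl(\beta+\frac32K\eta\Bigr)\epsilon ,\]
and the learning-rate hypothesis makes $r>0$. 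Summing the descent inequality over $t=0,\dots,T-1$ and using $L(f_T)\ge L^\star$ gives $\eta r\sum_t\lVert\nabla L(f_t)\rVert_\B^2\le L(f_0)-L^\star$. Bounding the minimum by the average yields the claim.

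For (iii), I combine the per-step descent $L(f_{t+1})\le L(f_t)-\eta r\lVert\nabla L(f_t)\rVert_\B^2$ with the Polyak--\L{}ojasiewicz condition (Assumption~\ref{assumpt:polyak-lojasiewicz}) and gradient compatibility (Assumption~\ref{assumpt:grad-compat}). Together these give
\[\lVert\nabla L(f_t)\rVert_\B^2\ge\beta^{-2}\lVert \D L(f_t;\cdot)\rVert_{\B^*}^2\ge 2\mu\beta^{-2}\bigl(L(f_t)-L^\star\bigr).\]
This yields the contraction $L(f_{t+1})-L^\star\le(1-2\eta\beta^{-2}\mu r)(L(f_t)-L^\star)$, and iterating $T$ times finishes the proof.

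The main obstacle is bookkeeping rather than depth. The constant $r$ and the admissible learning rate must be made consistent with the Propositions, which are phrased in terms of a bound on $\RelErr$ itself rather than on $\RelErr/(1-\RelErr)$. I would therefore define $r$ explicitly as above, verify $r>0$ under the stated $\eta$ range, and handle the degenerate case $\nabla L(f_t)=0$ separately.
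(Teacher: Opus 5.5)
Your proposal is correct and follows the paper's proof. Part (i) comes straight from the break rule $(1+\epsilon)U_t<\epsilon\lVert g_t\rVert_\B$ together with $\lVert g_t-\nabla L(f_t)\rVert_\B\le U_t$, and parts (ii)--(iii) are the two Propositions applied with relative-error bound $\epsilon/(1+\epsilon)$, which turns the paper's $r$ into exactly your $\alpha-\frac{K\eta}{2}-\bigl(\beta+\frac{3}{2}K\eta\bigr)\epsilon$. You also tighten the paper's argument slightly: you prove termination directly from $U_n\to 0$ and $\lVert g_n\rVert_\B\to\lVert\nabla L(f_t)\rVert_\B>0$, and you flag the stationary case $\nabla L(f_t)=0$, where the paper's appeal to Lemma~\ref{lemma:approx-error-limit} divides by zero and the loop need not terminate.
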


\section{Experiments}

In this section we empirically assess our method on three representative functional optimization tasks:
regression in an RKHS, solution of partial differential equations and inverse rendering.
Throughout, we focus on comparing our method against (i) neural networks (with appropriate engineering), and (ii) FGD with fixed representations.

All experiments were run on a desktop with an Intel Core i9-14900KF CPU and NVIDIA GeForce RTX 4090 GPU, with 128GB of RAM and 24GB of VRAM, respectively.
That said, the experiments are relatively cheap and should run on weaker hardware.
Code to reproduce our results can be found at \url{https://github.com/dccsillag/experiments-adaptive-fgd}.
In all experiments the learning rates were tuned to obtain the best possible performance of each method.

\subsection{Regression in an RKHS}\label{sec:exp1}

\begin{figure}[t]
    \centering
    \begin{minipage}{.5\textwidth}
        \small
        \begin{tabular}{lcccc}
            \toprule
             & \multicolumn{2}{c}{MSE} & \multicolumn{2}{c}{Cross-Entropy} \\
            \cmidrule(lr){2-3}\cmidrule(lr){4-5}
            Method
                & \shortstack{Test \\ Loss} & \shortstack{Train \\ Time}
                & \shortstack{Test \\ Loss} & \shortstack{Train \\ Time}
                \\
            \midrule
            Neural network
                & 12.84 & 1.02s
                & 0.3325 & 1.05s
                \\
            Approx. FGD (depth 2)
                & 0.1310 & 257ms
                & 0.7167 & 209ms
                \\
            Approx. FGD (depth 4)
                & 0.0900 & 232ms
                & 0.5134 & 234ms
                \\
            Approx. FGD (depth 8)
                & 0.0588 & 274ms
                & 0.4024 & 194ms
                \\
            Approx. FGD (depth 12)
                & 0.0444 & 762ms
                & 0.2916 & 806ms
                \\
            Adaptive FGD \textbf{(ours)}
                & \textbf{0.0378} & 843ms
                & \textbf{0.2434} & 815ms
                \\
            \bottomrule
        \end{tabular}
    \end{minipage}\hfill
    \begin{minipage}{.35\textwidth}
        \vspace{0.25cm}
        \includegraphics[width=\textwidth]{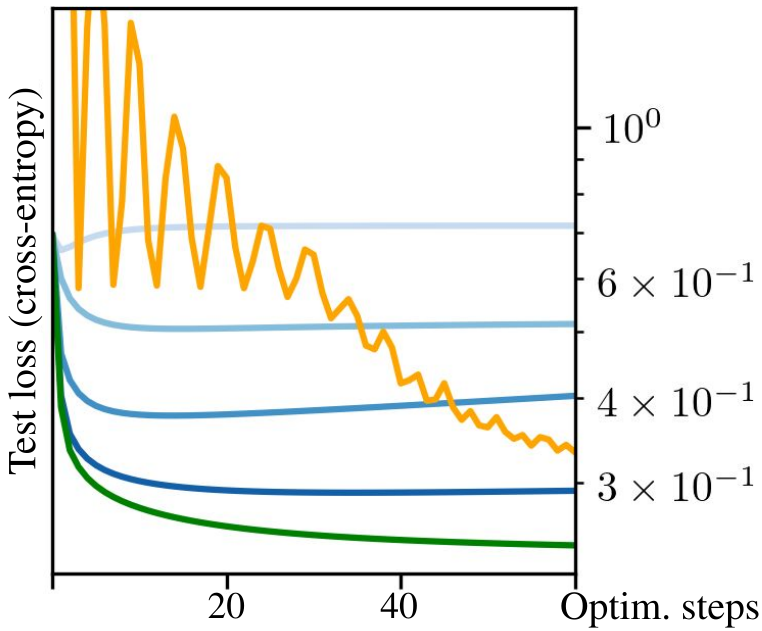}
    \end{minipage}
    \caption{\textbf{Our method for regression in an RKHS.} We compare our method against FGD with a fixed tree representation and a neural network for regression. Our method quickly converges towards the global minimizer with a good test loss, whereas fixed-representation FGD stagnates at a higher loss, and the neural network is slow to converge.}\label{fig:exp1}
\end{figure}

In this section we consider the optimization problem
\begin{equation}
    \argmin_{f \in \H_K} L_\mathrm{REG}(f), \qquad\text{with}\qquad L_\mathrm{REG}(f) := \frac{1}{n} \sum_{i=1}^n \ell\bigl( f(X_i), Y_i \bigr),
\end{equation}
where $\H_K$ is an RKHS with kernel $K$.
Here, $\ell$ denotes a pointwise loss such as the mean squared error ($\ell(\hat{y}, y) = \frac{1}{2} (\hat{y} - y)^2$) or binary cross-entropy ($\ell(\hat{y}, y) = -y \log \hat{y} - (1-y) \log (1-\hat{y})$).
Such regression problems are commonplace in statistical inference procedures, including conditional independence testing \citep{kernel-ci-test}, asymptotic statistics \citep{van-der-vaart}, and more.

The loss $L_\mathrm{REG}$ admits simple functional gradients, is $K$-smooth and Polyak-Łojasiewicz:

\begin{proposition}\label{prop:gradient-kernel}
    The functional $L_\mathrm{REG} : \mathcal{H}_K \to \R$ has gradients given by
    \[ \bigl[ \nabla L_\mathrm{REG}(f) \bigr](x) = \frac{1}{n} \sum_{i=1}^n \ell'(f(X_i), Y_i) K(X_i, x), \]
    where $\ell'$ denotes the derivative of $\ell$ with respect to its first argument.
    Moreover, if $\ell$ is strongly-convex in its first argument, then $L_\mathrm{REG}$ is Polyak-Łojasiewicz, and if $\ell$ is $K$-smooth then so is $L_\mathrm{REG}$.
    \looseness=-1
\end{proposition}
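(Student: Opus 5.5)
The proof has three parts: the gradient formula, which follows from the reproducing property; Polyak-Łojasiewicz, which follows from strong convexity of $\ell$ plus a projection argument; and $K$-smoothness, which follows from smoothness of $\ell$ plus the boundedness of point evaluations. Throughout, write $k_x := K(x,\cdot)$ and assume the kernel is bounded, $\kappa^2 := \sup_x K(x,x) < \infty$; this is needed to control evaluations by the RKHS norm.

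\textbf{Gradient formula.} By the reproducing property, $f(X_i) = \langle f, k_{X_i}\rangle_{\H_K}$, so $f \mapsto f(X_i)$ is a bounded linear functional. By the chain rule for the scalar map $\ell(\cdot, Y_i)$,
\[ \D L_\mathrm{REG}(f; \vec f) = \frac1n \sum_i \ell'(f(X_i),Y_i)\,\vec f(X_i) = \Bigl\langle \frac1n\sum_i \ell'(f(X_i),Y_i)\, k_{X_i},\ \vec f \Bigr\rangle_{\H_K}. \]
This is linear in $\vec f$, and it is continuous in $f$ because evaluation is continuous and $\ell'$ is continuous. So $L_\mathrm{REG}$ is Fréchet differentiable, and Riesz representation (as in \eqref{e-functional-gradient}) identifies the gradient as the stated formula.

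\textbf{Smoothness.} Suppose $\ell(\cdot,y)$ is $K_\ell$-smooth, written $K_\ell$ to avoid clashing with the kernel $K$. Apply the scalar descent inequality pointwise at each $X_i$ with $a = f'(X_i)$ and $b = f(X_i)$, then average over $i$:
\[ L(f) \le L(f') + \D L(f'; f-f') + \frac{K_\ell}{2n}\sum_i \bigl(f(X_i)-f'(X_i)\bigr)^2 . \]
Since $|h(x)| \le \kappa \|h\|$, the last term is at most $\frac{K_\ell\kappa^2}{2}\|f-f'\|^2$. So the smoothness constant is $K_\ell\kappa^2$, which equals $K_\ell$ after normalizing the kernel. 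The same argument works verbatim on any extension space $\B$ in which evaluation is bounded, for example $\B = C$ or $L^\infty$ with the sup norm, where $\kappa$ is replaced by $1$.

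\textbf{Polyak-Łojasiewicz.} This is the main obstacle. Suppose $\ell(\cdot,y)$ is $m$-strongly convex, and let $z_i = f(X_i)$ and $c_i = \ell'(z_i,Y_i)$. The plan is to bound $L(f) - L^\star$ by a finite-dimensional quantity and then compare it with the gradient norm.

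1. Since $L(g)$ depends only on the vector $v = (g(X_i))_i$, write $L(g) = \frac1n\sum_i \ell(v_i,Y_i)$. By strong convexity,
\[ \ell(v_i,Y_i) \ge \ell(z_i,Y_i) + c_i(v_i - z_i) + \tfrac m2 (v_i-z_i)^2 . \]

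2. Minimizing the right-hand side over all $v \in \R^n$ gives
\[ L(f) - L^\star \le \frac{1}{2mn}\sum_i c_i^2 , \]
because $L^\star \ge \inf_v \frac1n\sum_i \ell(v_i,Y_i)$.

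3. Relate $\sum_i c_i^2$ to the squared gradient norm
\[ \|\nabla L(f)\|^2 = \frac{1}{n^2}\, c^\top G c, \qquad G = \bigl(K(X_i,X_j)\bigr)_{ij}. \]
This requires the Gram matrix to be positive definite, with $\lambda_{\min}(G) > 0$, which holds for distinct points and a strictly positive definite kernel. Then $c^\top G c \ge \lambda_{\min}\|c\|^2$.

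4. Combining steps 2 and 3 gives the PL condition with
\[ \mu = \frac{m\,\lambda_{\min}(G)}{n} . \]

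For the version of Assumption~\ref{assumpt:polyak-lojasiewicz} stated with the $\B^*$ norm, bound $\|\D L(f;\cdot)\|_{\B^*}$ from below. Testing against functions in $\B$ that interpolate prescribed values $\operatorname{sign}(c_i)$ at the $X_i$ gives $\|\D L(f;\cdot)\|_{\B^*} \ge \frac1n\sum_i |c_i| \ge \frac1n \|c\|_2$, which is again proportional to $\|c\|$.

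I expect the delicate points to be two. First, whether $L^\star$ over $\H_K$ equals the unconstrained finite-dimensional infimum; step 2 only needs the inequality, and invertibility of $G$ in fact gives equality, since any $v$ is attained by $\sum_j a_j k_{X_j}$. Second, $\mu$ depends on the data through $\lambda_{\min}(G)$, and the proof should state this dependence explicitly.
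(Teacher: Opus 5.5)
Your proof is correct. For the gradient formula and for smoothness it is the paper's argument: the reproducing property plus Riesz for the former, and for the latter the scalar descent inequality at each $X_i$, with each evaluation then bounded by the norm. The paper does smoothness only on $\B = L^\infty$, where the constant is unchanged. So your remark that on $\mathcal{H}_K$ itself the constant picks up the factor $\sup_x K(x,x)$ is a correct refinement.

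For the Polyak--\L{}ojasiewicz part you share the paper's reduction $L(f)-L^\star\le\frac{1}{2mn}\lVert c\rVert_2^2$, but you convert $\lVert c\rVert_2$ into a gradient norm differently.

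\textbf{The paper's route.} It inverts the Gram matrix, $c = nG^{-1}g_X$, where $g_X$ collects the values of $\nabla L(f)$ at the data. It then uses $\lVert g_X\rVert_2\le\sqrt{n}\,\lVert\nabla L(f)\rVert_{L^\infty}$ and ends with $L(f)-L^\star\le\frac{n^2}{2m\lambda_{\min}^2(G)}\lVert\nabla L(f)\rVert_{L^\infty}^2$. This is a PL inequality in the $\B$-norm of the gradient. It plugs directly into the sufficient-descent bound without passing through $\beta$. It is not literally the $\B^*$ form of Assumption~\ref{assumpt:polyak-lojasiewicz}, though: one still needs $\lVert\nabla L(f)\rVert_{L^\infty}\le\kappa\,\lVert\D L(f;\cdot)\rVert_{(L^\infty)^*}$ with $\kappa=\sup_{x,x'}\lvert K(x,x')\rvert$, which the paper does not state.

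\textbf{Your route.}
\begin{itemize}
\item The Rayleigh-quotient step gives PL in the native $\mathcal{H}_K$ geometry, with $\mu = m\lambda_{\min}(G)/n$.
\item The dual-norm test gives the $\B^*$ form directly. From $\lVert c\rVert_2\le\lVert c\rVert_1\le n\,\lVert\D L(f;\cdot)\rVert_{(L^\infty)^*}$ you get $\mu = m/n$, with no dependence on $G$.
\item Consequently, positive definiteness of the Gram matrix is needed only for the $\mathcal{H}_K$ version. The $\B^*$ version, which is what the convergence theory actually consumes, needs only distinct data points.
\end{itemize}

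The one detail to make explicit is that the interpolating test function must have $\B$-norm at most $1$. For distinct $X_i$ you can take $\sum_i\operatorname{sign}(c_i)\,\ind_{\{X_i\}}$ in $L^\infty$, or disjoint bumps of height one in $C$.
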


As mentioned in Section~\ref{sec:background}, the space $\H_K$ is generally quite constrained.
For this reason, in this experiment we opt to approximate the gradients in the larger Banach space $L^\infty$ of bounded functions, which can be checked to satisfy our gradient bridging assumptions (Assumptions~\ref{assumpt:h-descends-in-b} and \ref{assumpt:grad-compat}), cf. Proposition~\ref{prop:gradient-bridging-rkhs}.
For our adaptive representations we then choose a decision tree-based approximation scheme inspired by the classic CART algorithm (which reside in $L^\infty$, but generally not in $\H_K$).
By using sufficiently deep trees we are able to approximate functions arbitrarily well, allowing us to use Lemma~\ref{lemma:approx-error-limit} and thus ensure convergence as per Theorem~\ref{thm:main-theorem}.

For this experiment we use the binary classification dataset of \citep{libsvm-data} for the detection of non-coding RNA sequences.
We consider two losses $\ell$: mean squared error and binary cross entropy.
Figure~\ref{fig:exp1} shows the results of our experiment.
We find that our method consistently outperforms a standard MLP neural network as well as FGD with any fixed representation, adapting as necessary to the target function and guaranteeing convergence to the global optimum while achieving a better test loss.
\looseness=-1

\subsection{Solving the wave equation}\label{sec:exp2}

\begin{figure}[t]
    \includegraphics[width=0.95\columnwidth]{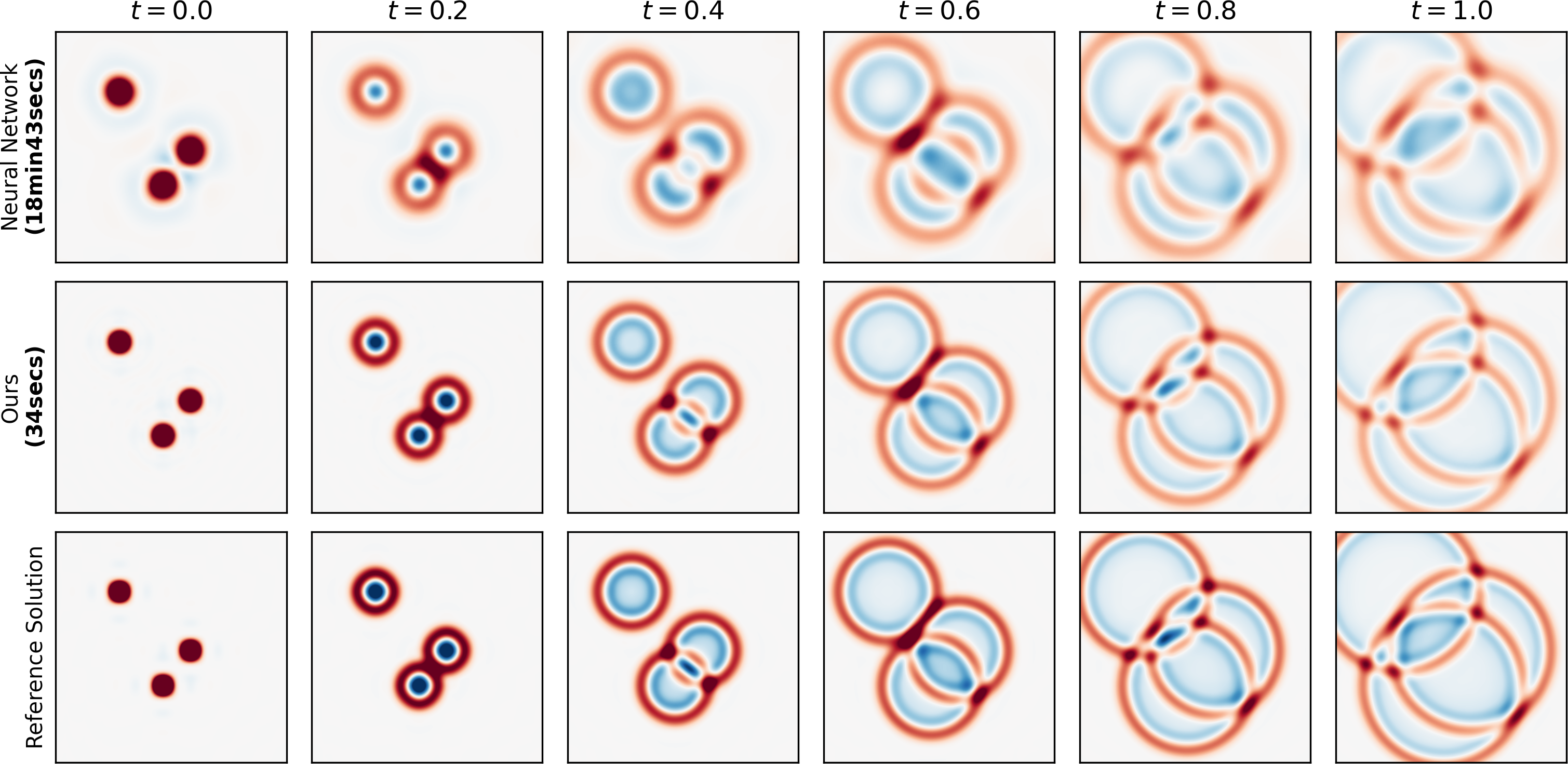}
    \vspace{-0.2cm}
\caption{\textbf{Our method solves PDEs.}
    Comparison of learned wave dynamics over time.
    Our adaptive FGD solution (middle) closely matches the reference solution (bottom), while achieving higher quality than the neural network baseline (top) with a much lower training time.}
    \label{fig:exp2}
    \vspace{-0.2cm}
\end{figure}

This section shows how our method can be applied to physics-informed machine learning tasks \citep{raissi2019physics, bizzi2025neusa}.
As an example, consider the wave equation for $f:\R\times\R^2\to\R$,
\begin{equation}\label{eq:wave-equation}
    \frac{\partial^2 f}{\partial t^2} = c^2 \Delta f,
    \qquad \text{with} \qquad
    \begin{cases}
        f(0, \textbf{x}) = h(\textbf{x}),
        \\
        \frac{\partial f}{\partial t}(0, \mathbf{x}) = 0;
    \end{cases}
\end{equation}
which models wave propagation phenomena such as sound, mechanical vibrations, and electromagnetic waves~\citep{evans2010partial,jackson1999classical,jensen2011computational,marques2025stable}.
Although solutions exist under broad conditions on the initial data $h$, explicit closed-form solutions are generally unavailable in realistic settings, requiring numerical approximation methods.

We can formulate solving \autoref{eq:wave-equation} as minimizing the following loss functional
\begin{align}
    L_\mathrm{PDE}(f) &:=\!\!
        \int \frac{1}{2} \left[ \frac{\partial^2 f}{\partial t^2} - c^2  \Delta f \right]^2 \!\!\dif \textbf{x} \dif t
 +\!\!\int \frac{1}{2} \left[ f(0, \textbf{x}) - h(\textbf{x}) \right]^2 \dif \textbf{x}
            +\!\! \int \frac{1}{2} \left[ \frac{\partial f}{\partial t}(0, \textbf{x}) \right]^2 \!\!\dif \textbf{x},
\end{align}
which vanishes if and only if $f$ solves \autoref{eq:wave-equation}.
The functional is defined over the Sobolev space $\H = H^2(\R^3)$ so that the second-order derivatives are well-defined.

Computing functional gradients in Sobolev spaces is generally challenging~\citep{fgd-random-gradientfree}.
However, the \emph{Fourier transform} of the gradient admits a closed-form expression:

\begin{proposition}\label{prop:gradient-pde}
    Let $\omega := \tau^2 - c^2(\xi^2+\zeta^2)$.
    The Fourier transform of the gradient of $L_\mathrm{PDE}$ is
    {\small
        \[\!\!\!
        [\widehat{\nabla L_\mathrm{PDE}(f)}](\tau,\xi,\zeta)
        \!=\!
        \frac{
            \omega^2 \widehat{f}(\tau,\xi,\zeta)
            + \frac{1}{2\pi}\int \widehat{f}(\tau',\xi,\zeta)\dif\tau'
            \!-\! \frac{1}{\sqrt{2\pi}}\widehat{h}(\xi,\zeta)
            + \frac{\tau}{2\pi}\left(\int \tau' \widehat{f}(\tau',\xi,\zeta)\dif\tau'\right)
        }{
            1 + (\tau^2+\xi^2+\zeta^2) + (\tau^2+\xi^2+\zeta^2)^2
        }.
    \]
    }
\end{proposition}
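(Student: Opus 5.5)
The plan is to compute the directional derivative $\D L_\mathrm{PDE}(f;\vec f)$ directly, rewrite every term in the Fourier domain via Plancherel, and then read off the Riesz representer with respect to the $H^2$ inner product. I will use the unitary Fourier convention $\widehat{u}(k) = (2\pi)^{-d/2}\int u(y)e^{-i k\cdot y}\,\dif y$. I take the (equivalent) $H^2(\R^3)$ inner product to be
\[ \langle u, v\rangle_{H^2} = \int \bigl(1 + |k|^2 + |k|^4\bigr)\, \widehat{u}(k)\,\overline{\widehat{v}(k)}\,\dif k, \qquad k = (\tau,\xi,\zeta), \]
which is what the stated denominator indicates. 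With this choice, if I can show $\D L_\mathrm{PDE}(f;\vec f) = \int N(k)\,\overline{\widehat{\vec f}(k)}\,\dif k$ for all $\vec f\in H^2$, then uniqueness in the Riesz representation \eqref{e-functional-gradient} gives
\[ \widehat{\nabla L_\mathrm{PDE}(f)} = \frac{N}{1+|k|^2+|k|^4}. \]

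\textbf{Directional derivative.} Each of the three terms of $L_\mathrm{PDE}$ is a squared $L^2$ norm of an affine map of $f$. Differentiating therefore gives the sum of the corresponding $L^2$ inner products:
\begin{itemize}
\item $\int (f_{tt}-c^2\Delta f)(\vec f_{tt}-c^2\Delta\vec f)$;
\item $\int (f(0,\cdot)-h)\,\vec f(0,\cdot)$;
\item $\int \partial_t f(0,\cdot)\,\partial_t\vec f(0,\cdot)$.
\end{itemize}

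\textbf{First term.} The operator $\partial_t^2 - c^2\Delta$ has symbol $-\tau^2 + c^2(\xi^2+\zeta^2) = -\omega$. By 3D Plancherel the first term becomes $\int \omega^2\,\widehat f\,\overline{\widehat{\vec f}}\,\dif k$, which yields the $\omega^2\widehat f$ contribution.

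\textbf{Trace terms.} Here the key identity is the Fourier slice formula: the 2D transform of $u(0,\cdot)$ is $(2\pi)^{-1/2}\int \widehat u(\tau',\xi,\zeta)\,\dif\tau'$.

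For the second term, apply 2D Plancherel in $(\xi,\zeta)$. This produces
\[ \int\Bigl[\tfrac{1}{2\pi}\textstyle\int \widehat f\,\dif\tau' - \tfrac{1}{\sqrt{2\pi}}\widehat h\Bigr]\, \overline{\textstyle\int\widehat{\vec f}(\tau,\xi,\zeta)\,\dif\tau}\;\dif\xi\,\dif\zeta. \]
The bracket does not depend on $\tau$, so I can pull the $\tau$-integral outside. This writes the term as $\int(\cdot)\,\overline{\widehat{\vec f}(\tau,\xi,\zeta)}\,\dif\tau\,\dif\xi\,\dif\zeta$, as required.

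For the third term, the transform of $\partial_t u(0,\cdot)$ is $(2\pi)^{-1/2}\int i\tau'\,\widehat u\,\dif\tau'$. The factor $i$ from $f$ and the $\overline{i\tau}=-i\tau$ from $\vec f$ multiply to $\tau\tau'$. After the same interchange of integrals this gives the $\frac{\tau}{2\pi}\int\tau'\widehat f\,\dif\tau'$ contribution. Summing the three contributions gives exactly the stated numerator $N$.

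\textbf{Main obstacle: justification.} I must check that the $\tau$-line integrals $\int\widehat f\,\dif\tau'$ and $\int\tau'\widehat f\,\dif\tau'$ converge and that the interchange of integrals (Fubini) is legitimate. I would use Cauchy--Schwarz with the weight $(1+|k|^2+|k|^4)$:
\[ \int |\tau'|^j|\widehat f|\,\dif\tau' \le \Bigl(\int (1+|k|^2+|k|^4)|\widehat f|^2\,\dif\tau'\Bigr)^{1/2}\Bigl(\int \frac{\tau'^{2j}}{1+|k|^2+|k|^4}\,\dif\tau'\Bigr)^{1/2}. \]
For $j\le 1$ the second factor is finite and behaves like $(1+\xi^2+\zeta^2)^{(2j-3)/4}$. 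Integrating over $(\xi,\zeta)$ then shows the traces lie in $L^2(\R^2)$ (indeed in $H^{1/2}$), which justifies Plancherel and Fubini. I would also check that the derivative is linear and continuous in $\vec f$, so Riesz applies, and that $N/(1+|k|^2+|k|^4)$ indeed lies in the Fourier image of $H^2$; the latter follows from the same weighted bounds, assuming $f\in H^4$ so that $\omega^2\widehat f$ is controlled. Finally, real-valuedness is consistent because $N$ has the Hermitian symmetry $N(-k)=\overline{N(k)}$, using that $f$ and $h$ are real.
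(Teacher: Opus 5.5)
Your proposal is correct and follows the paper's proof essentially step for step. Like the paper, you compute $\D L_\mathrm{PDE}(f;\vec f)$, pass each term to the Fourier side by Plancherel (using the slice identity $\widehat{u(0,\cdot)}(\xi,\zeta) = (2\pi)^{-1/2}\int \widehat u(\tau',\xi,\zeta)\,\dif\tau'$ for the two trace terms), and divide by the symbol $1+|k|^2+|k|^4$ of the chosen $H^2$ inner product.

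Your integrability and Fubini checks go beyond what the paper writes, but the extra assumption $f\in H^4$ is not needed. Since $\omega^2\le \max(1,c^4)\,(1+|k|^2+|k|^4)$, the map $\vec f\mapsto\D L_\mathrm{PDE}(f;\vec f)$ is already bounded on $H^2$ for $f\in H^2$ and $h\in L^2$. Riesz then guarantees the representer lies in $H^2$, and your Fourier identification pins it down.
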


We therefore approximate gradients using adaptive uniform grids in frequency space.
Each grid defines a piecewise-constant approximation of the Fourier transform $\widehat{f}$, while the corresponding function $f$ is recovered through the inverse Fourier transform.
Moreover, the Fourier representation of the $H^2(\R^3)$ norm yields a straightforward upper bound on the gradient approximation error.

Figure~\ref{fig:exp2} shows the results.
Our adaptive method consistently outperforms FGD with fixed representations, which quickly plateaus due to approximation error.
It also surpasses the neural network baseline while reducing wall-clock training time by nearly two orders of magnitude.

\subsection{Learning radiance fields}

We next consider the inverse rendering task of reconstructing a 3D scene from posed images~\citep{mildenhall2021nerf}.
This can be framed as a functional optimization problem: given images with known camera parameters, the goal is to learn the scene as a density field and a view-dependent color field,
\[
    \sigma : \R^3 \to \R
    \qquad\text{and}\qquad
    c : \R^3 \times \mathbb{S}^2 \to \R^3.
\]
Given a camera ray $\gamma : [t_\near,t_\far]\to\R^3$ with viewing direction $\gamma_\omega$, where $t_\near$ and $t_\far$ denote integration bounds, we can render the scene via the volumetric rendering equation~\citep{max1995optical} with distributional antialiasing~\citep{cook1984distributed}, yielding the antialiased rendering operator $\widetilde{R}_{\sigma,c}(\gamma)$, which is differentiable w.r.t. $\sigma$ and $c$.
Then, given a set of pixels $(Y_{i,j})_{i=1,j=1}^{n,P}$ and corresponding rays $(\gamma_{i,j})_{i=1,j=1}^{n,P}$ originating from the respective cameras $i = 1, \ldots, n$, our goal is to optimize the loss
\begin{align}\label{eq:photometric}
  L_\mathrm{CV}(\sigma, c) := \frac{1}{n} \sum_{i=1}^n \frac{1}{P} \sum_{j=1}^P \frac{1}{2} \lVert \widetilde{R}_{\sigma,c}(\gamma_{i,j}) - Y_{i,j} \rVert^2,
\end{align}
which is well-defined for $\sigma \in L^2(\Omega)$ and $c \in L^2(\Omega \to \mathcal{H}^{\mathbb{S}})$, where $\Omega$ is a compact subset of $\R^3$ and $\mathcal{H}^\mathbb{S}$ is an RKHS with domain over the unit sphere $\mathbb{S}^2$.
This loss directly measures image reconstruction error and is the standard supervision signal for radiance-field optimization (cf. e.g. \citep{mildenhall2021nerf,kerbl20233d}).
It is highly nonconvex; nonetheless, it has functional gradients known in closed form, cf. Appendix~\ref{suppl:sec:functional-gradient-3d}, which we efficiently approximate by the means of uniform 3D grids with spherical harmonics for the color view-dependence.

We evaluate on the Ficus scene from the NeRF-Synthetic dataset~\citep{mildenhall2021nerf}, using the same camera poses and train/test split for all methods. All methods optimize the photometric loss in \eqref{eq:photometric}; we compare adaptive FGD with an Adam-trained neural network baseline and fixed-grid FGD variants. As shown in Figure~\ref{fig:radiance}, adaptive FGD achieves the lowest test loss and sharpest reconstructions by starting coarse and refining only when needed, whereas fixed-grid FGD plateaus at a resolution-dependent error floor and the neural baseline converges much more slowly.

\begin{figure}[t]
\centering
\includegraphics[width=\columnwidth]{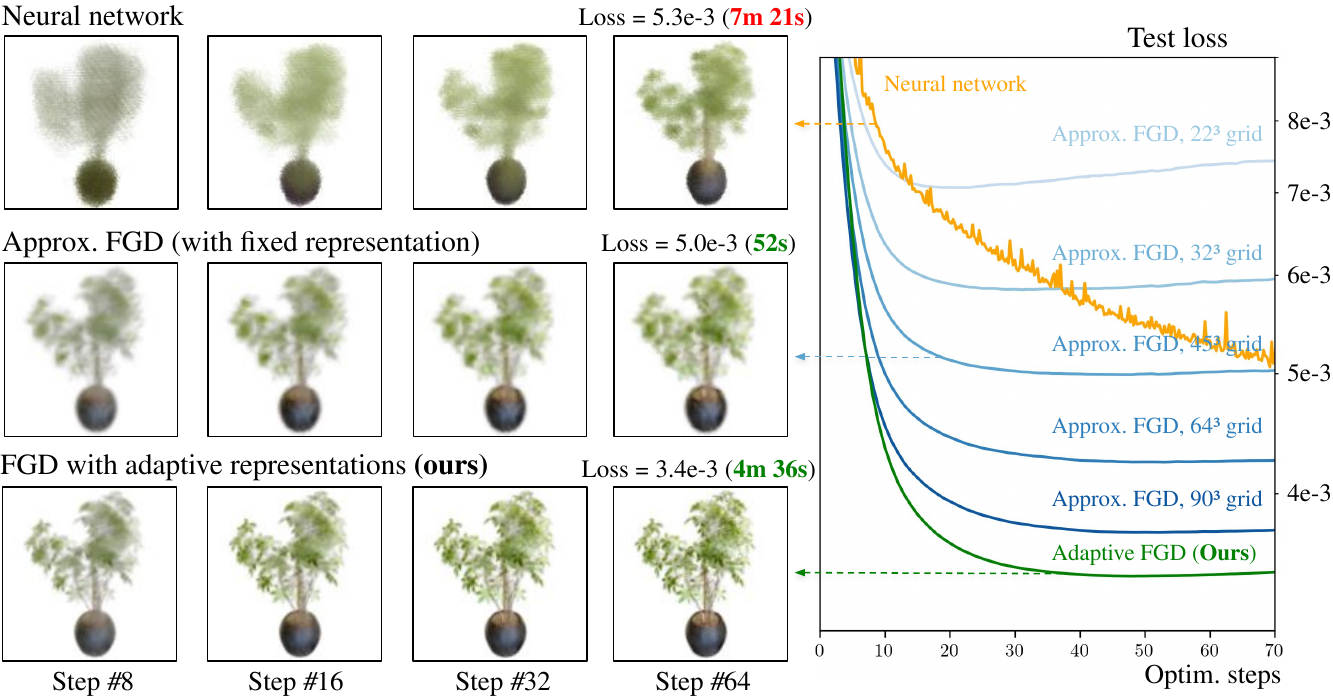}
\caption{\textbf{Our method solves inverse rendering.} We compare novel-view renderings, from viewpoints not used during training, for an Adam-trained neural network (top), fixed-grid FGD (middle), and adaptive FGD (bottom).
    Adaptive FGD progressively refines its representation, achieving a lower loss and sharper reconstructions.}
\label{fig:radiance}\end{figure}

\begin{ack}
    We'd like to thank Google for partially funding this work. We also acknowledge partial funding from CNPq and FAPERJ.
\end{ack}

\bibliographystyle{unsrtnat}
\bibliography{bibliography}

%%%%%%%%%%%%%%%%%%%%%%%%%%%%%%%%%%%%%%%%%%%%%%%%%%%%%%%%%%%%

\newpage
\appendix

\section{Proofs}

\begin{proof}[Proof of Lemma~\ref{lemma:sufficient-descent-lemma}]
    Let us derive the bound in the lemma.
    By $K$-smoothness, we have that
    \begin{align*}
        L(f_t - \eta g_t) - L(f_t)
        &\leq \D L(f_t; -\eta g_t) + \frac{K}{2} \lVert -\eta g_t \rVert_{\B}^2
        = -\eta \D L(f_t; g_t) + \frac{K \eta^2}{2} \lVert g_t \rVert_{\B}^2;
    \end{align*}
    Writing $g_t = \nabla L(f_t) - (\nabla L(f_t) - g_t) =: \nabla L(f_t) - e_t$, it follows:\footnote{Throughout, we follow the convention that $0/0 = 0$ and $a/0 = +\infty$ for all $a>0$.}
    \begin{align*}
        &\mathrel{\phantom{=}} -\eta \D L(f_t; g_t) + \frac{K \eta^2}{2} \lVert g_t \rVert_{\B}^2
        \\ &= -\eta \D L(f_t; \nabla L(f_t)) + \eta \D L(f_t; e_t) + \frac{K \eta^2}{2} \lVert \nabla L(f_t) - e_t \rVert_{\B}^2
        \\ &\leq -\eta \alpha \lVert \nabla L(f_t) \rVert_\B^2 + \eta \D L(f_t; e_t) + \frac{K \eta^2}{2} \lVert \nabla L(f_t) - e_t \rVert_{\B}^2
            \hspace{8.4em} \text{(Assumption~\ref{assumpt:h-descends-in-b})}
        \\ &\leq -\eta \alpha \lVert \nabla L(f_t) \rVert_\B^2 + \eta \lVert \D L(f_t; \cdot) \rVert_{\B^*} \lVert e_t \rVert_\B + \frac{K \eta^2}{2} \lVert \nabla L(f_t) - e_t \rVert_{\B}^2
            \hspace{4.5em} \text{(Hölder)}
        \\ &\leq -\eta \alpha \lVert \nabla L(f_t) \rVert_\B^2 + \eta \beta \lVert \nabla L(f_t) \rVert_\B \lVert e_t \rVert_\B + \frac{K \eta^2}{2} \lVert \nabla L(f_t) - e_t \rVert_{\B}^2
            \hspace{4.9em} \text{(Assumption~\ref{assumpt:grad-compat})}
        \\ &\leq -\eta \alpha \lVert \nabla L(f_t) \rVert_\B^2 + \eta \beta \lVert \nabla L(f_t) \rVert_\B \lVert e_t \rVert_\B + \frac{K \eta^2}{2} \Bigl( \lVert \nabla L(f_t) \rVert_\B + \lVert e_t \rVert_\B \Bigr)^2
            \hspace{1.7em}  \text{(Triangle ineq.)}
        \\ &= -\eta \alpha \lVert \nabla L(f_t) \rVert_\B^2 + ( \eta \beta + K \eta^2 ) \lVert \nabla L(f_t) \rVert_\B \lVert e_t \rVert_\B + \frac{K \eta^2}{2} \lVert \nabla L(f_t) \rVert_\B^2 + \frac{K \eta^2}{2} \lVert e_t \rVert_\B^2
        \\ &= -\eta \left( \alpha - \frac{K\eta}{2} - (\beta + K\eta) \frac{\lVert e_t \rVert_\B}{\lVert \nabla L(f_t) \rVert_\B} - \frac{K\eta}{2} \frac{\lVert e_t \rVert_\B^2}{\lVert \nabla L(f_t) \rVert_\B^2} \right) \lVert \nabla L(f_t) \rVert_\B^2;
    \end{align*}
    to simplify the expression, let us assume that ${\lVert \nabla L(f_t) - g_t \rVert_\B}/{\lVert \nabla L(f_t) \rVert_\B} = {\lVert e_t \rVert_\B}/{\lVert \nabla L(f_t) \rVert_\B} \leq 1$. This allows us to use the fact that $x^2 \leq x$ for $\lvert x \rvert \leq 1$ to write
    \looseness=-1
    \begin{align*}
        &\mathrel{\phantom{=}} -\eta \left( \alpha - \frac{K\eta}{2} - (\beta + K\eta) \frac{\lVert \nabla L(f_t) - g_t \rVert_\B}{\lVert \nabla L(f_t) \rVert_\B} - \frac{K\eta}{2} \frac{\lVert \nabla L(f_t) - g_t \rVert_\B^2}{\lVert \nabla L(f_t) \rVert_\B^2} \right) \lVert \nabla L(f_t) \rVert_\B^2;
        \\ &\leq -\eta \left( \alpha - \frac{K\eta}{2} - (\beta + K\eta) \frac{\lVert \nabla L(f_t) - g_t \rVert_\B}{\lVert \nabla L(f_t) \rVert_\B} - \frac{K\eta}{2} \frac{\lVert \nabla L(f_t) - g_t \rVert_\B}{\lVert \nabla L(f_t) \rVert_\B} \right) \lVert \nabla L(f_t) \rVert_\B^2;
        \\ &= -\eta \left( \alpha - \frac{K\eta}{2} - \left(\beta + \frac{3}{2} K\eta\right) \frac{\lVert \nabla L(f_t) - g_t \rVert_\B}{\lVert \nabla L(f_t) \rVert_\B} \right) \lVert \nabla L(f_t) \rVert_\B^2.
    \end{align*}
    As long as the expression within the parentheses is positive, we ensure descent.
    However, the relative error ${\lVert \nabla L(f_t) - g_t \rVert_\B}/{\lVert \nabla L(f_t) \rVert_\B}$ is a bit tricky to manage in practice since the denominator involves the $\lVert \nabla L(f_t) \rVert_\B$, which can be hard to tightly lower bound.
    Fortunately, we can replace it with $\lVert g_t \rVert_\B$, which can be generally be computed exactly; indeed, by the reverse triangle inequality,\footnote{$\bigl\lvert \lVert u \rVert - \lVert v \rVert \bigr\rvert \leq \lVert u - v \rVert$; as an immediate consequence, $\lVert u \rVert - \lVert v \rVert \leq \lVert u - v \rVert$.}
    \begin{align*}
        \lVert g_t \rVert_\B - \lVert \nabla L(f_t) \rVert_\B
        \leq \lVert g_t - \nabla L(f_t) \rVert_\B
        = \frac{\lVert g_t - \nabla L(f_t) \rVert_\B}{\lVert g_t \rVert_\B} \lVert g_t \rVert_\B;
    \end{align*}
    rearranging, we obtain that
    \[ \lVert \nabla L(f_t) \rVert_\B \geq \left( 1 - \frac{\lVert g_t - \nabla L(f_t) \rVert_\B}{\lVert g_t \rVert_\B} \right) \lVert g_t \rVert_\B, \]
    and thus
    \[ \frac{\lVert g_t - \nabla L(f_t) \rVert_\B}{\lVert \nabla L(f_t) \rVert_\B} \leq \frac{\lVert g_t - \nabla L(f_t) \rVert_\B}{\left( 1 - \frac{\lVert g_t - \nabla L(f_t) \rVert_\B}{\lVert g_t \rVert_\B} \right) \lVert g_t \rVert_\B} = \frac{\RelErr(g_t, \nabla L(f_t))}{1 - \RelErr(g_t, \nabla L(f_t))}, \]
    where
    \[ \RelErr(g_t, \nabla L(f_t)) := \frac{\lVert g_t - \nabla L(f_t) \rVert_\B}{\lVert g_t \rVert_\B}; \]
    finally, to guarantee that ${\lVert g_t - \nabla L(f_t) \rVert_\B}/{\lVert \nabla L(f_t) \rVert_\B} \leq 1$ as required, it suffices to require $\RelErr(g_t, \nabla L(f_t)) \leq 1/2$, as $\frac{1}{2} / (1 - \frac{1}{2}) = 1$.
    Putting it all together, we obtain the desired bound.
\end{proof}

\begin{proof}[Proof of Proposition~\ref{prop:convergence-to-stationary-point}]
    It follows:
    \begin{align*}
        L(f_T) - L(f_0)
        &= \sum_{t=0}^{T-1} \left( L(f_{t+1}) - L(f_t) \right)
        \\ &\leq -\eta \sum_{t=0}^{T-1} \left[ \alpha - \frac{K \eta}{2} - \left( \beta + \frac{3}{2} K \eta \right) \frac{\RelErr(g_t, \nabla L(f_t))}{1-\RelErr(g_t, \nabla L(f_t))} \right] \lVert \nabla L(f_t) \rVert_\B^2
        \\ &\leq -\eta \left[ \alpha - \frac{K \eta}{2} - \left( \beta + \frac{3}{2} K \eta \right) \frac{\epsilon}{1-\epsilon} \right] \sum_{t=0}^{T-1} \lVert \nabla L(f_t) \rVert_\B^2,
    \end{align*}
    where the first inequality holds by Lemma~\ref{lemma:sufficient-descent-lemma}, and the second by the assumption that $\RelErr(g_t, \nabla L(f_t)) \leq \epsilon$ for all $t = 1, \ldots, T$.

    Rearranging and scaling by $1/T$ on both sides, we get
    \begin{align*}
        \eta \left[ \alpha - \frac{K \eta}{2} - \left( \beta + \frac{3}{2} K \eta \right) \frac{\epsilon}{1-\epsilon} \right] \frac{1}{T} \sum_{t=0}^{T-1} \lVert \nabla L(f_t) \rVert_\B^2
        \leq \frac{L(f_0) - L(f_T)}{T}
        \leq \frac{L(f_0) - L^\star}{T};
    \end{align*}
    dividing by $r = \left[ \alpha - \frac{K \eta}{2} - \left( \beta + \frac{3}{2} K \eta \right) \frac{\epsilon}{1-\epsilon} \right]$ on both sides yields the desired bound; to show that $r>0$, simply note that
    \begin{align*}
        &\mathrel{\phantom{=}} \alpha - \frac{K \eta}{2} - \left( \beta + \frac{3}{2} K \eta \right) \frac{\epsilon}{1-\epsilon}
        \\ &= \alpha - \frac{K}{2} \eta - \beta \frac{\epsilon}{1-\epsilon} - \frac{3}{2} \eta K \frac{\epsilon}{1-\epsilon}
        \\ &= -\left( \frac{3}{2} K \frac{\epsilon}{1-\epsilon} + \frac{K}{2} \right) \eta + \left( \alpha - \beta \frac{\epsilon}{1-\epsilon} \right) > 0
        \\ &\iff \left( \alpha - \beta \frac{\epsilon}{1-\epsilon} \right) > \left( \frac{3}{2} K \frac{\epsilon}{1-\epsilon} + \frac{K}{2} \right) \eta
        \\ &\iff \eta < \frac{\alpha - \beta \frac{\epsilon}{1-\epsilon}}{\frac{3}{2} K \frac{\epsilon}{1-\epsilon} + \frac{K}{2}}
                      = \frac{\alpha (1-\epsilon) - \beta \epsilon}{\frac{3}{2} K \epsilon + \frac{K}{2} (1-\epsilon)}
                      = \frac{\alpha - (\alpha + \beta) \epsilon}{ \frac{K}{2} \left( 2\epsilon + 1 \right) }
                      = \frac{2 (\alpha - (\alpha + \beta) \epsilon)}{ K \left( 2\epsilon + 1 \right) },
    \end{align*}
    which is positive thanks to the upper bound on $\epsilon$.
\end{proof}

\begin{proof}[Proof of Proposition~\ref{prop:convergence-to-global-minimizer}]
    We have that
    \begin{align*}
        L(f_{t+1}) - L(f_t)
        &\leq -\eta \left[ \alpha - \frac{K \eta}{2} - \left( \beta + \frac{3}{2} K \eta \right) \frac{\RelErr(g_t, \nabla L(f_t))}{1-\RelErr(g_t, \nabla L(f_t))} \right] \lVert \nabla L(f_t) \rVert_\B^2
        \\ &\leq -\eta \left[ \alpha - \frac{K \eta}{2} - \left( \beta + \frac{3}{2} K \eta \right) \frac{\epsilon}{1-\epsilon} \right] \lVert \nabla L(f_t) \rVert_\B^2
        \\ &= -\eta r \, \lVert \nabla L(f_t) \rVert_\B^2
        \\ &\leq -\eta \beta^{-2} r \, \lVert \D L(f_t; \cdot) \rVert_{\B^*}^2
        \\ &\leq -2 \eta \beta^{-2} \mu r \, \bigl( L(f_t) - L^\star \bigr),
    \end{align*}
    where the first inequality holds by Lemma~\ref{lemma:sufficient-descent-lemma}, the second by the assumption that $\RelErr(g_t, \nabla L(f_t)) \leq \epsilon$ for all $t = 1, \ldots, T$, and the third by the Polyak-Łojasiewicz assumption.
    Rearranging:
    \begin{align*}
        &\mathrel{\phantom{\iff}} L(f_{t+1}) - L(f_t) \leq -2 \eta \beta^{-2} \mu r \, \bigl( L(f_t) - L^\star \bigr)
        \\ &\iff \bigl( L(f_{t+1}) - L(f_t) \bigr) + \bigl( L(f_t) - L^\star \bigr) \leq -2 \eta \beta^{-2} \mu r \, \bigl( L(f_t) - L^\star \bigr) + \bigl( L(f_t) - L^\star \bigr)
        \\ &\iff L(f_{t+1}) - L^\star \leq (1 - 2 \eta \beta^{-2} \mu r) \, \bigl( L(f_t) - L^\star \bigr);
    \end{align*}
    chaining these over $t = 0, \ldots, T-1$ yields
    \[
        L(f_T) - L^\star
        \leq [1 - 2 \eta \beta^{-2} \mu r]^T \, \bigl( L(f_0) - L^\star \bigr). \qedhere
    \]
\end{proof}

\begin{proof}[Proof of Lemma~\ref{lemma:approx-error-limit}]
    First, note that by the sandwich theorem,
    \[ \lim_{n\to\infty} U(g_n, \nabla L(f)) = 0 \implies g_n \to \nabla L(f); \]
    Then, by continuity,
    \begin{align*}
        \lim_{n \to \infty} \RelErr(g_n, \nabla L(f))
        &= \lim_{n \to \infty} \frac{\lVert \nabla L(f) - g_n \rVert_\B}{\lVert g_n \rVert_\B}
        = \frac{\lim_{n \to \infty} \lVert \nabla L(f) - g_n \rVert_\B}{\lim_{n \to \infty} \lVert g_n \rVert_\B}
        \\ &= \frac{\lVert \nabla L(f) - \lim_{n \to \infty} g_n \rVert_\B}{\lVert \lim_{n \to \infty} g_n \rVert_\B}
        = \frac{\lVert \nabla L(f) - \nabla L(f) \rVert_\B}{\lVert \nabla L(f) \rVert_\B}
        = 0,
    \end{align*}
    and so
    \begin{align*}
        \lim_{n \to \infty} \frac{\RelErr(g_n, \nabla L(f))}{1 - \RelErr(g_n, \nabla L(f))}
        = \frac{\lim_{n \to \infty} \RelErr(g_n, \nabla L(f))}{1 - \lim_{n \to \infty} \RelErr(g_n, \nabla L(f))}
        = \frac{0}{1 - 0} = 0.
    \end{align*}
    The result then follows by the definition of the limit of a sequence: for any $\epsilon > 0$, there exists some $n$ such that the bound is satisfied for all $n' \geq n$ -- which, in particular, must hold for $n$ itself.
\end{proof}

\begin{proof}[Proof of Theorem~\ref{thm:main-theorem}]
    Let us start by establishing claim (i).
    For Algorithm~\ref{alg:main-algorithm} to perform a gradient descent step, we must have that
    \[ (1 + \epsilon) U_t < \epsilon \lVert g_t \rVert_\B, \]
    i.e., that
    \[ \frac{U_t}{\lVert g_t \rVert_\B} \leq \frac{\epsilon}{1+\epsilon}; \]
    and since $\lVert g_t - \nabla L(f_t) \rVert_\B \leq U_t$, we have that
    \[ \RelErr(g_t, \nabla L(f_t)) = \frac{\lVert g_t - \nabla L(f_t) \rVert_\B}{\lVert g_t \rVert_\B} \leq \frac{\epsilon}{1+\epsilon} < \frac{1}{2}, \]
    where the last inequality follows by the fact that $\epsilon \in (0, 1)$.
    Additionally, by Lemma~\ref{lemma:approx-error-limit}, the while loop is guaranteed to terminate, allowing the algorithm to proceed.

    Claims (ii) and (iii) then follow immediately by applying Propositions~\ref{prop:convergence-to-stationary-point} and \ref{prop:convergence-to-global-minimizer}.
\end{proof}

\subsection{Computation of Functional Derivatives for the Experiments}

\subsubsection{Function fitting (Figure~\ref{fig:teaser})}

We consider here the loss $L_\mathrm{FIT} : L^2([0,1]^2) \to \R$ given by
\[ L_\mathrm{FIT}(f) := \frac{1}{2} \lVert f - f^\star \rVert_{L^2}^2 = \iint \frac{1}{2} \bigl[ f(x, y) - f^\star(x, y) \bigr]^2 \dif x \dif y. \]

It holds:

\begin{proposition}
    The functional $L_\mathrm{FIT} : L^2([0,1]^2) \to \R$ has gradients given by
    \[ \nabla L_\mathrm{FIT}(f)(x, y) = f(x, y) - f^\star(x, y), \]
    and is 1-Polyak-Łojasiewicz.
\end{proposition}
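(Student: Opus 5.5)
We have to show two things: the gradient formula, and the $1$-Polyak--Łojasiewicz inequality, for $L_\mathrm{FIT}(f) = \frac12\lVert f - f^\star\rVert_{L^2}^2$ on $\H = L^2([0,1]^2)$. Here we take $\B = \H$, in which case the $\B^*$ norm of $\D L(f;\cdot)$ equals the $L^2$ norm of the gradient (as remarked after Assumption~\ref{assumpt:grad-compat}).

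\textbf{Gradient.} We compute the directional derivative directly. For $f, \vec f \in L^2$ and $\delta>0$, expanding the square gives
\[
L_\mathrm{FIT}(f+\delta\vec f) - L_\mathrm{FIT}(f) = \delta\langle f - f^\star, \vec f\rangle_{L^2} + \tfrac{\delta^2}{2}\lVert \vec f\rVert_{L^2}^2 .
\]
Dividing by $\delta$ and letting $\delta\searrow 0$ yields $\D L_\mathrm{FIT}(f;\vec f) = \langle f - f^\star, \vec f\rangle_{L^2}$.

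This expression is linear in $\vec f$. By Cauchy--Schwarz it is bounded, with operator norm $\lVert f-f^\star\rVert_{L^2}$. It is also continuous, in fact Lipschitz, in $f$. Hence $L_\mathrm{FIT}$ is Fréchet differentiable in the sense of Section~\ref{sec:background}. By uniqueness of the Riesz representer in \eqref{e-functional-gradient}, $\nabla L_\mathrm{FIT}(f) = f - f^\star$.

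\textbf{Polyak--Łojasiewicz.} First identify $L^\star = \inf_{f\in L^2} L_\mathrm{FIT}(f)$. Since $L_\mathrm{FIT}\ge 0$ and $f = f^\star$ attains $0$, we get $L^\star = 0$. This tacitly assumes $f^\star \in L^2([0,1]^2)$, which is needed anyway for $L_\mathrm{FIT}$ to be finite.

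Next, $\lVert \D L_\mathrm{FIT}(f;\cdot)\rVert_{\B^*} = \sup_{\lVert b\rVert\le 1}\langle f-f^\star, b\rangle = \lVert f - f^\star\rVert_{L^2}$. The supremum is attained at $b = (f-f^\star)/\lVert f-f^\star\rVert$, with the trivial case $f=f^\star$ handled separately. Therefore
\[
L_\mathrm{FIT}(f) - L^\star = \tfrac12 \lVert f-f^\star\rVert^2 = \tfrac{1}{2\cdot 1}\lVert \D L_\mathrm{FIT}(f;\cdot)\rVert_{\B^*}^2 ,
\]
which is Assumption~\ref{assumpt:polyak-lojasiewicz} with $\mu=1$, holding with equality.

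\textbf{Possible obstacles.} None of these steps is technically hard. The only real points of care are making the choice $\B = \H$ explicit, and checking that $L^\star = 0$ requires $f^\star \in L^2$.
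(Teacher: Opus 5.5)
Your proposal is correct and follows essentially the same route as the paper: compute $\D L_\mathrm{FIT}(f;\vec{f}) = \langle f - f^\star, \vec{f}\rangle_{L^2}$, identify the gradient via Riesz, and observe that $L_\mathrm{FIT}(f) - 0 = \frac{1}{2}\lVert \nabla L_\mathrm{FIT}(f)\rVert_{L^2}^2$. Your exact expansion of the square (rather than differentiating under the integral, which the paper does without justifying the interchange) and your explicit identification of the dual norm with $\lVert f - f^\star\rVert_{L^2}$ under $\B = \H$ are small tidy-ups that the paper leaves implicit.
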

\begin{proof}
    The directional derivative is
    \begin{align*}
        \D L_\mathrm{FIT}(f; \vec{f})
        &= \left[ \frac{\dif}{\dif \delta} L_\mathrm{FIT}(f + \delta \vec{f}) \right]_{\delta=0}
        \\ &= \left[ \frac{\dif}{\dif \delta} \iint \frac{1}{2} \bigl[ f(x, y) + \delta \vec{f}(x, y) - f^\star(x, y) \bigr]^2 \dif x \dif y \right]_{\delta=0}
        \\ &= \left[ \iint \frac{\dif}{\dif \delta} \frac{1}{2} \bigl[ f(x, y) + \delta \vec{f}(x, y) - f^\star(x, y) \bigr]^2 \dif x \dif y \right]_{\delta=0}
        \\ &= \iint \bigl[ f(x, y) - f^\star(x, y) \bigr] \vec{f}(x, y) \dif x \dif y
        \\ &= \left\langle f - f^\star, \vec{f} \right\rangle_{L^2},
    \end{align*}
    and so
    \[ \nabla L_\mathrm{FIT}(f)(x, y) = f(x, y) - f^\star(x, y). \]
    To show that the loss is 1-Polyak-Łojasiewicz, note:
    \[
        L_\mathrm{FIT}(f) - \inf_{f' \in L^2([0,1]^2)} L_\mathrm{FIT}(f')
        = L_\mathrm{FIT}(f) - 0
        = \frac{1}{2} \lVert f - f^\star \rVert_{L^2}^2
        = \frac{1}{2} \lVert \nabla L_\mathrm{FIT}(f) \rVert_{L^2}^2.
        \qedhere
    \]
\end{proof}

\subsubsection{Regression in an RKHS (Section~\ref{sec:exp1})}

\begin{proof}[Proof of Proposition~\ref{prop:gradient-kernel}]
    The directional derivative is
    \begin{align*}
        \D L_\mathrm{REG}(f; \vec{f})
        &= \left[ \frac{\dif}{\dif \delta} L_\mathrm{REG}(f + \delta \vec{f}) \right]_{\delta=0}
        \\ &= \left[ \frac{\dif}{\dif \delta} \frac{1}{n} \sum_{i=1}^n \ell(f(X_i) + \delta \vec{f}(X_i), Y_i) \right]_{\delta=0}
        \\ &= \left[ \frac{\dif}{\dif \delta} \frac{1}{n} \sum_{i=1}^n \ell(\langle f + \delta \vec{f}, K_{X_i} \rangle_{\mathcal{H}_K}, Y_i) \right]_{\delta=0}
        \\ &= \left[ \frac{1}{n} \sum_{i=1}^n \frac{\dif}{\dif \delta} \ell(\langle f + \delta \vec{f}, K_{X_i} \rangle_{\mathcal{H}_K}, Y_i) \right]_{\delta=0}
        \\ &= \left[ \frac{1}{n} \sum_{i=1}^n \ell'(\langle f + \delta \vec{f}, K_{X_i} \rangle_{\mathcal{H}_K}, Y_i) \langle \vec{f}, K_{X_i} \rangle_{\mathcal{H}_K} \right]_{\delta=0}
        \\ &= \frac{1}{n} \sum_{i=1}^n \ell'(\langle f, K_{X_i} \rangle_{\mathcal{H}_K}, Y_i) \langle \vec{f}, K_{X_i} \rangle_{\mathcal{H}_K}
        \\ &= \frac{1}{n} \sum_{i=1}^n \ell'(f(X_i), Y_i) \langle \vec{f}, K_{X_i} \rangle_{\mathcal{H}_K}
        \\ &= \left\langle \vec{f}, \frac{1}{n} \sum_{i=1}^n \ell'(f(X_i), Y_i) K_{X_i} \right\rangle_{\mathcal{H}_K},
    \end{align*}
    and thus
    \[ \nabla L_\mathrm{REG}(f)(x) = \frac{1}{n} \sum_{i=1}^n \ell'(f(X_i), Y_i) K(X_i, x). \]
    To establish that the loss is Polyak-Łojasiewicz, note that the loss depends on $f$ only through its evaluations at the data points. Let $v \in \R^n$ with $v_i = f(X_i)$, and let $g(v) = \frac{1}{n} \sum_{i=1}^n \ell(v_i, Y_i)$. Because $\ell$ is $\mu$-strongly convex in its first argument, $g$ is strongly convex with respect to the Euclidean norm with parameter $\mu/n$.

    Letting $c_i = \ell'(f(X_i), Y_i)$, the gradient is $\nabla g(v) = \frac{1}{n} c$. By standard strong convexity, the suboptimality is bounded by the gradient norm:
    \[ L_\mathrm{REG}(f) - \inf_{f' \in \mathcal{H}_K} L_\mathrm{REG}(f') \leq \frac{1}{2(\mu/n)} \lVert \nabla g(v) \rVert_2^2 = \frac{n}{2\mu} \sum_{i=1}^n \left( \frac{1}{n} c_i \right)^2 = \frac{1}{2\mu n} \lVert c \rVert_2^2. \]
    We must now bound $\lVert c \rVert_2^2$ using the $L^\infty$ norm of the extended RKHS gradient. Recall that the continuous extension of the gradient to $\mathcal{B} = L^\infty$ is the function:
    \[ \nabla L_\mathrm{REG}(f)(\cdot) = \frac{1}{n} \sum_{j=1}^n c_j K(X_j, \cdot). \]
    Let $K \in \R^{n \times n}$ be the empirical kernel matrix, and let $g_X \in \R^n$ be the evaluation of this gradient at the training points, so $g_X^{(i)} = \nabla L_\mathrm{REG}(f)(X_i)$. In matrix form, we have $g_X = \frac{1}{n} K c$.

    Assuming the kernel matrix $K$ is strictly positive definite, we can invert this relationship to get $c = n K^{-1} g_X$. Bounding the Euclidean norm of $c$ via the smallest eigenvalue $\lambda_{\min}(K) > 0$ yields:
    \[ \lVert c \rVert_2 \leq n \lVert K^{-1} \rVert_2 \lVert g_X \rVert_2 = \frac{n}{\lambda_{\min}(K)} \lVert g_X \rVert_2. \]
    Next, we bound the discrete Euclidean norm $\lVert g_X \rVert_2$ by the supremum norm over the entire space. Since $\lvert g_X^{(i)} \rvert \leq \sup_x \lvert \nabla L_\mathrm{REG}(f)(x) \rvert = \lVert \nabla L_\mathrm{REG}(f) \rVert_{L^\infty}$ for all $i$, we have:
    \[ \lVert g_X \rVert_2^2 = \sum_{i=1}^n \bigl( g_X^{(i)} \bigr)^2 \leq n \lVert \nabla L_\mathrm{REG}(f) \rVert_{L^\infty}^2 \implies \lVert g_X \rVert_2 \leq \sqrt{n} \lVert \nabla L_\mathrm{REG}(f) \rVert_{L^\infty}. \]
    Substituting this back into the bound for $\lVert c \rVert_2$, we get:
    \[ \lVert c \rVert_2 \leq \frac{n^{3/2}}{\lambda_{\min}(K)} \lVert \nabla L_\mathrm{REG}(f) \rVert_{L^\infty} \implies \lVert c \rVert_2^2 \leq \frac{n^3}{\lambda_{\min}^2(K)} \lVert \nabla L_\mathrm{REG}(f) \rVert_{L^\infty}^2. \]
    Finally, plugging this into our suboptimality bound yields:
    \[ L_\mathrm{REG}(f) - L^\star \leq \frac{1}{2\mu n} \left( \frac{n^3}{\lambda_{\min}^2(K)} \lVert \nabla L_\mathrm{REG}(f) \rVert_{L^\infty}^2 \right) = \frac{n^2}{2\mu \lambda_{\min}^2(K)} \lVert \nabla L_\mathrm{REG}(f) \rVert_{L^\infty}^2. \]

    Finally, to show that $L_\mathrm{REG}$ is $K$-smooth:
    let $f, f' \in L^\infty$. Using the definition of $L_\mathrm{REG}$ and applying the $K$-smoothness of $\ell$ to each data point $X_i$, we have:
    \begin{align*}
        L_\mathrm{REG}(f) 
        &= \frac{1}{n} \sum_{i=1}^n \ell(f(X_i), Y_i) \\
        &\leq \frac{1}{n} \sum_{i=1}^n \left[ \ell(f'(X_i), Y_i) + \ell'(f'(X_i), Y_i) \bigl( f(X_i) - f'(X_i) \bigr) + \frac{K}{2} \bigl( f(X_i) - f'(X_i) \bigr)^2 \right] \\
        &= \left( \frac{1}{n} \sum_{i=1}^n \ell(f'(X_i), Y_i) \right) + \left( \frac{1}{n} \sum_{i=1}^n \ell'(f'(X_i), Y_i) \bigl( f(X_i) - f'(X_i) \bigr) \right) \\
        &\qquad + \frac{K}{2n} \sum_{i=1}^n \bigl( f(X_i) - f'(X_i) \bigr)^2 \\
        &= L_\mathrm{REG}(f') + \D L_\mathrm{REG}(f'; f - f') + \frac{K}{2n} \sum_{i=1}^n \bigl( f(X_i) - f'(X_i) \bigr)^2.
    \end{align*}
    Next, we bound the quadratic term using the definition of the $L^\infty$ norm. Since $\lvert f(X_i) - f'(X_i) \rvert \leq \sup_{x} \lvert f(x) - f'(x) \rvert = \lVert f - f' \rVert_{L^\infty}$ for all $i$, it follows that:
    \[
        \frac{K}{2n} \sum_{i=1}^n \bigl( f(X_i) - f'(X_i) \bigr)^2 \leq \frac{K}{2n} \sum_{i=1}^n \lVert f - f' \rVert_{L^\infty}^2 = \frac{K}{2} \lVert f - f' \rVert_{L^\infty}^2.
    \]
    Substituting this back into our inequality yields:
    \[
        L_\mathrm{REG}(f) \leq L_\mathrm{REG}(f') + \D L_\mathrm{REG}(f'; f - f') + \frac{K}{2} \lVert f - f' \rVert_{L^\infty}^2. \qedhere
    \]
\end{proof}

\begin{proposition}\label{prop:gradient-bridging-rkhs}
    Let $L_\mathrm{REG}$ be the regression functional over $\mathcal{H}_K$, and assume the empirical kernel matrix $K \in \R^{n \times n}$ over the training data is strictly positive definite with minimum eigenvalue $\lambda_{\min}(K) > 0$. Furthermore, assume the kernel is bounded, such that $\sup_{x, x'} \lvert K(x, x') \rvert \leq \kappa$. Then, evaluating the gradients over the Banach space $\mathcal{B} = L^\infty$, the loss $L_\mathrm{REG}$ satisfies:
    \begin{enumerate}
        \item[(i)] \textbf{Gradient compatibility (Assumption~\ref{assumpt:grad-compat}):} with constant $\beta = n \lambda_{\min}^{-1}(K)$.
        \item[(ii)] \textbf{$\mathcal{H}$ descends in $\mathcal{B}$ (Assumption~\ref{assumpt:h-descends-in-b}):} with constant $\alpha = \frac{\lambda_{\min}(K)}{n \kappa^2}$.
    \end{enumerate}
\end{proposition}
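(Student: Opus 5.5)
The plan is to reduce both claims to finite-dimensional linear algebra on the coefficient vector $c \in \R^n$ with $c_i = \ell'(f(X_i), Y_i)$. By the proof of Proposition~\ref{prop:gradient-kernel}, the extended gradient is $G := \nabla L_\mathrm{REG}(f) = \frac{1}{n}\sum_{j} c_j K(X_j, \cdot)$. For any $h \in L^\infty$ (read as bounded functions with the sup norm, so that point evaluation makes sense), the directional derivative is
\[ \D L_\mathrm{REG}(f; h) = \tfrac{1}{n}\sum_i c_i h(X_i). \]
Write $g_X = \frac{1}{n} K c$ for the vector of values of $G$ at the data points. Then there are two elementary norm relations:
\begin{itemize}
\item $\lVert g_X \rVert_2 \leq \sqrt{n}\,\lVert G \rVert_{L^\infty}$, as already used in the proof of Proposition~\ref{prop:gradient-kernel};
\item $\lVert G \rVert_{L^\infty} \leq \frac{\kappa}{n}\lVert c \rVert_1 \leq \frac{\kappa}{\sqrt n}\lVert c\rVert_2$, from the bound $\lvert K\rvert \leq \kappa$ and Cauchy--Schwarz.
\end{itemize}

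For (i), I compute the dual norm of $\D L_\mathrm{REG}(f;\cdot)$ on $L^\infty$. Since the functional only sees the values $h(X_i)$, each of modulus at most $\lVert h\rVert_{L^\infty}$, we get $\lVert \D L_\mathrm{REG}(f;\cdot)\rVert_{\B^*} \leq \frac{1}{n}\lVert c\rVert_1$. (Equality holds with $h = \mathrm{sign}(c_i)$ at $X_i$, but only the upper bound is needed.) Then I chain three bounds:
\begin{itemize}
\item $\lVert c\rVert_1 \leq \sqrt n \lVert c\rVert_2$;
\item inverting $g_X = \frac1n K c$ as in Proposition~\ref{prop:gradient-kernel} gives $\lVert c\rVert_2 \leq \frac{n}{\lambda_{\min}(K)}\lVert g_X\rVert_2$;
\item finally, $\lVert g_X\rVert_2 \leq \sqrt n \lVert G\rVert_{L^\infty}$.
\end{itemize}
Multiplying these together gives $\lVert \D L_\mathrm{REG}(f;\cdot)\rVert_{\B^*} \leq \frac{1}{n}\cdot \frac{n^2}{\lambda_{\min}(K)}\lVert G\rVert_{L^\infty}$, i.e.\ $\beta = n/\lambda_{\min}(K)$.

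For (ii), I plug $h = G$ into the directional derivative, giving
\[ \D L_\mathrm{REG}(f;G) = \tfrac1n \sum_i c_i G(X_i) = \tfrac{1}{n^2} c^\top K c \geq \tfrac{\lambda_{\min}(K)}{n^2}\lVert c\rVert_2^2. \]
From $\lVert G\rVert_{L^\infty} \leq \frac{\kappa}{\sqrt n}\lVert c\rVert_2$ we get $\lVert c\rVert_2^2 \geq \frac{n}{\kappa^2}\lVert G\rVert_{L^\infty}^2$. Combining the two yields $\D L_\mathrm{REG}(f;G) \geq \frac{\lambda_{\min}(K)}{n\kappa^2}\lVert G\rVert_{L^\infty}^2$, which is exactly $\alpha$.

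The computations are routine. The main point needing care is the functional-analytic setup: making sure $L_\mathrm{REG}$, its derivative and its gradient extend to $\B = L^\infty$ via point evaluations, as required by Assumption~\ref{assumpt:continuous-extension}. This means interpreting $L^\infty$ as bounded functions rather than a.e.\ equivalence classes, so that $h \mapsto h(X_i)$ is a bounded linear functional of norm at most one. Once that is fixed, the dual-norm bound in (i) is the only nontrivial step, and it follows from the observation that the functional factors through the evaluation map $h \mapsto (h(X_i))_i \in (\R^n, \lVert\cdot\rVert_\infty)$, whose dual norm is $\lVert\cdot\rVert_1$.
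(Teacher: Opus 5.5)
Your proposal is correct and follows the paper's proof essentially step for step. For (i) you bound the dual norm by $\frac{1}{n}\lVert c\rVert_1$ and chain $\lVert c\rVert_1 \le \sqrt{n}\lVert c\rVert_2$, $c = nK^{-1}g_X$ and $\lVert g_X\rVert_2 \le \sqrt{n}\lVert G\rVert_{L^\infty}$; for (ii) you combine $\D L_\mathrm{REG}(f;G) = \frac{1}{n^2}c^\top K c$ with the Rayleigh bound and $\lVert G\rVert_{L^\infty} \le \frac{\kappa}{n}\lVert c\rVert_1$, exactly as the paper does. Your remark that $L^\infty$ must be read as bounded functions under the sup norm, so that point evaluations are bounded functionals, is a useful clarification that the paper leaves implicit.
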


\begin{proof}
    As established previously, let $c_i = \ell'(f(X_i), Y_i)$ for $i=1, \dots, n$, such that the continuous extension of the gradient to $\mathcal{B} = L^\infty$ is $\nabla L_\mathrm{REG}(f)(x) = \frac{1}{n} \sum_{i=1}^n c_i K(X_i, x)$. The directional derivative for any $\vec{f} \in L^\infty$ is $\D L_\mathrm{REG}(f; \vec{f}) = \frac{1}{n} \sum_{i=1}^n c_i \vec{f}(X_i)$.

    For gradient compatibility,
    we must show that $\lVert \D L_\mathrm{REG}(f; \cdot) \rVert_{(L^\infty)^*} \leq \beta \lVert \nabla L_\mathrm{REG}(f) \rVert_{L^\infty}$.
    Recall that the dual norm of the directional derivative over $L^\infty$ is exactly the scaled $L_1$ norm of the coefficients:
    \[
        \lVert \D L_\mathrm{REG}(f; \cdot) \rVert_{(L^\infty)^*} = \frac{1}{n} \lVert c \rVert_1.
    \]
    Let $g_X \in \R^n$ be the vector of gradient evaluations at the training points, so $g_X^{(i)} = \nabla L_\mathrm{REG}(f)(X_i)$. In matrix form, $g_X = \frac{1}{n} K c$, which gives $c = n K^{-1} g_X$. 
    By the definition of the supremum norm, the maximum absolute evaluation of the gradient over the entire domain bounds the maximum absolute evaluation at the training points: $\lVert g_X \rVert_\infty \leq \lVert \nabla L_\mathrm{REG}(f) \rVert_{L^\infty}$.

    It then follows:
    \begin{align*}
        \lVert c \rVert_1 
        \leq \sqrt{n} \lVert c \rVert_2 
        &= \sqrt{n} \lVert n K^{-1} g_X \rVert_2 \\
        &\leq n^{3/2} \lVert K^{-1} \rVert_2 \lVert g_X \rVert_2 \\
        &\leq n^{3/2} \lambda_{\min}^{-1}(K) \left( \sqrt{n} \lVert g_X \rVert_\infty \right) \\
        &= n^2 \lambda_{\min}^{-1}(K) \lVert g_X \rVert_\infty.
    \end{align*}
    Substituting this into the dual norm equation and applying $\lVert g_X \rVert_\infty \leq \lVert \nabla L_\mathrm{REG}(f) \rVert_{L^\infty}$ yields:
    \[
        \lVert \D L_\mathrm{REG}(f; \cdot) \rVert_{(L^\infty)^*} \leq \frac{1}{n} \left( n^2 \lambda_{\min}^{-1}(K) \lVert \nabla L_\mathrm{REG}(f) \rVert_{L^\infty} \right) = n \lambda_{\min}^{-1}(K) \lVert \nabla L_\mathrm{REG}(f) \rVert_{L^\infty}.
    \]
    Thus, gradient compatibility holds with $\beta = n \lambda_{\min}^{-1}(K)$.

    To establish that $\mathcal{H}$ descends in $\mathcal{B}$,
    we must show $\D L_\mathrm{REG}(f; \nabla L_\mathrm{REG}(f)) \geq \alpha \lVert \nabla L_\mathrm{REG}(f) \rVert_{L^\infty}^2$.
    Evaluating the directional derivative in the direction of the gradient yields:
    \[
        \D L_\mathrm{REG}(f; \nabla L_\mathrm{REG}(f)) = \frac{1}{n} \sum_{i=1}^n c_i \nabla L_\mathrm{REG}(f)(X_i) = \frac{1}{n} c^\top \left( \frac{1}{n} K c \right) = \frac{1}{n^2} c^\top K c.
    \]
    We now require an upper bound for $\lVert \nabla L_\mathrm{REG}(f) \rVert_{L^\infty}^2$ in terms of $c^\top K c$. Using the triangle inequality and the kernel bound $\kappa$:
    \[
        \lVert \nabla L_\mathrm{REG}(f) \rVert_{L^\infty} = \sup_{x} \left\lvert \frac{1}{n} \sum_{i=1}^n c_i K(X_i, x) \right\rvert \leq \frac{1}{n} \sum_{i=1}^n \lvert c_i \rvert \sup_{x} \lvert K(X_i, x) \rvert \leq \frac{\kappa}{n} \lVert c \rVert_1.
    \]
    Squaring both sides and applying the norm inequality $\lVert c \rVert_1^2 \leq n \lVert c \rVert_2^2$, we have:
    \[
        \lVert \nabla L_\mathrm{REG}(f) \rVert_{L^\infty}^2 \leq \frac{\kappa^2}{n^2} \lVert c \rVert_1^2 \leq \frac{\kappa^2}{n} \lVert c \rVert_2^2.
    \]
    By the Rayleigh quotient for positive definite matrices, $c^\top K c \geq \lambda_{\min}(K) \lVert c \rVert_2^2$, meaning $\lVert c \rVert_2^2 \leq \lambda_{\min}^{-1}(K) c^\top K c$. Substituting this gives:
    \[
        \lVert \nabla L_\mathrm{REG}(f) \rVert_{L^\infty}^2 \leq \frac{\kappa^2}{n \lambda_{\min}(K)} c^\top K c.
    \]
    Rearranging to isolate the quadratic form $c^\top K c$:
    \[
        \frac{1}{n^2} c^\top K c \geq \frac{\lambda_{\min}(K)}{n \kappa^2} \lVert \nabla L_\mathrm{REG}(f) \rVert_{L^\infty}^2.
    \]
    Since the left side is exactly $\D L_\mathrm{REG}(f; \nabla L_\mathrm{REG}(f))$, the condition is satisfied with $\alpha = \frac{\lambda_{\min}(K)}{n \kappa^2}$.
\end{proof}

\subsubsection{Solving Partial Differential Equations (Section~\ref{sec:exp2})}

\begin{proof}[Proof of Proposition~\ref{prop:gradient-pde}]
    The directional derivative is
    {\small
    \begin{align*}
        \D L_\mathrm{PDE}(f; \vec{f})
        &= \left[ \frac{\dif}{\dif \delta} L_\mathrm{PDE}(f + \delta \vec{f}) \right]_{\delta=0}
        \\ &= \Biggl[ \frac{\dif}{\dif \delta} \iiint \frac{1}{2} \left[ \frac{\partial^2 f}{\partial t^2} + \delta \frac{\partial^2 \vec{f}}{\partial t^2} - c^2 \left( \frac{\partial^2 f}{\partial x^2} + \frac{\partial^2 f}{\partial y^2} \right) - \delta c^2 \left( \frac{\partial^2 \vec{f}}{\partial x^2} + \frac{\partial^2 \vec{f}}{\partial y^2} \right) \right]^2 \dif x \dif y \dif t
            \\ &\qquad + \frac{\dif}{\dif \delta} \iint \frac{1}{2} \left[ f(0, x, y) + \delta \vec{f}(0, x, y) - h(x, y) \right]^2 \dif x \dif y
            \\ &\qquad + \frac{\dif}{\dif \delta} \iint \frac{1}{2} \left[ \frac{\partial f}{\partial t}(0, x, y) + \delta \frac{\partial \vec{f}}{\partial t}(0, x, y) \right]^2 \dif x \dif y \Biggr]_{\delta=0}
        \\ &= \iiint \left[ \frac{\partial^2 f}{\partial t^2} - c^2 \left( \frac{\partial^2 f}{\partial x^2} + \frac{\partial^2 f}{\partial y^2} \right) \right] \left[ \frac{\partial^2 \vec{f}}{\partial t^2} - c^2 \left( \frac{\partial^2 \vec{f}}{\partial x^2} + \frac{\partial^2 \vec{f}}{\partial y^2} \right) \right] \dif x \dif y \dif t
            \\ &\qquad + \iint \left[ f(0, x, y) - h(x, y) \right] \vec{f}(0, x, y) \dif x \dif y
            \\ &\qquad + \iint \frac{\partial f}{\partial t}(0, x, y) \, \frac{\partial \vec{f}}{\partial t}(0, x, y) \dif x \dif y.
    \end{align*}
    }
    Now, we have to rewrite this as an inner product on $H^2(\R^3)$.
    Recall how the Sobolev inner product can be given in terms of the Fourier transform \citep{evans}:
    \begin{align*}
        \langle f, g \rangle_{H^2(\R^3)}
        &= \int \left( 1 + \lVert \xi \rVert^2 + \lVert \xi \rVert^4 \right) \widehat{f}(\xi) \, \overline{\widehat{g}(\xi)} \dif \xi.
    \end{align*}
    Therefore, let us rewrite the directional derivative to be of this form. To start, we will make it so that all references to $f$ are through its Fourier transform, each term at a time:
    {\small
    \begin{align*}
        & \iiint \left[ \frac{\partial^2 f}{\partial t^2} - c^2 \left( \frac{\partial^2 f}{\partial x^2} + \frac{\partial^2 f}{\partial y^2} \right) \right] \left[ \frac{\partial^2 \vec{f}}{\partial t^2} - c^2 \left( \frac{\partial^2 \vec{f}}{\partial x^2} + \frac{\partial^2 \vec{f}}{\partial y^2} \right) \right] \dif x \dif y \dif t
        \\ &= \iiint \left[ i^2 \tau^2 \widehat{f}(\tau, \xi, \zeta) - c^2 i^2 \left( \xi^2 + \zeta^2 \right) \widehat{f}(\tau, \xi, \zeta) \right] \overline{\left[ i^2 \tau^2 \widehat{\vec{f}}(\tau, \xi, \zeta) - c^2 i^2 \left( \xi^2 + \zeta^2 \right) \widehat{\vec{f}}(\tau, \xi, \zeta) \right]} \dif \tau \dif \xi \dif \zeta
        \\ &= \iiint \left[ i^2 \left( \tau^2 - c^2 \left( \xi^2 + \zeta^2 \right) \right) \widehat{f}(\tau, \xi, \zeta) \right] \overline{\left[ i^2 \left( \tau^2 - c^2 \left( \xi^2 + \zeta^2 \right) \right) \widehat{\vec{f}}(\tau, \xi, \zeta) \right]} \dif \tau \dif \xi \dif \zeta
        \\ &= \iiint \left[ i^2 \left( \tau^2 - c^2 \left( \xi^2 + \zeta^2 \right) \right) \right] \overline{\left[ i^2 \left( \tau^2 - c^2 \left( \xi^2 + \zeta^2 \right) \right) \right]} \, \widehat{f}(\tau, \xi, \zeta) \overline{\widehat{\vec{f}}(\tau, \xi, \zeta)} \dif \tau \dif \xi \dif \zeta
        \\ &= \iiint \left( \tau^2 - c^2 \left( \xi^2 + \zeta^2 \right) \right) \overline{\left( \tau^2 - c^2 \left( \xi^2 + \zeta^2 \right) \right)} \, \widehat{f}(\tau, \xi, \zeta) \overline{\widehat{\vec{f}}(\tau, \xi, \zeta)} \dif \tau \dif \xi \dif \zeta;
        \\ &= \iiint \left[ \tau^2 - c^2 \left( \xi^2 + \zeta^2 \right) \right]^2 \, \widehat{f}(\tau, \xi, \zeta) \overline{\widehat{\vec{f}}(\tau, \xi, \zeta)} \dif \tau \dif \xi \dif \zeta;
    \end{align*}
    }
    For the initial condition terms, we will use the fact that
    \[ \widehat{u}(\xi, \zeta) = \frac{1}{\sqrt{2\pi}} \int \widehat{u}(\tau, \xi, \zeta) \dif \tau; \]
    It then follows:
    \begin{align*}
        &\iint \left[ f(0, x, y) - h(x, y) \right] \vec{f}(0, x, y) \dif x \dif y
        \\ &= \iint \left[ \frac{1}{\sqrt{2\pi}} \int \widehat{f}(\tau, \xi, \zeta) \dif\tau - \widehat{h}(\xi, \zeta) \right] \overline{\frac{1}{\sqrt{2\pi}} \int \widehat{\vec{f}}(\tau, \xi, \zeta) \dif \tau} \dif \xi \dif \zeta
        \\ &= \iiint \left[ \frac{1}{2\pi} \int \widehat{f}(\tau', \xi, \zeta) \dif\tau' - \frac{1}{\sqrt{2\pi}} \widehat{h}(\xi, \zeta) \right] \overline{\widehat{\vec{f}}(\tau, \xi, \zeta)} \dif \tau \dif \xi \dif \zeta,
    \end{align*}
    and
    \begin{align*}
        & \iint \frac{\partial f}{\partial t}(0, x, y) \, \frac{\partial \vec{f}}{\partial t}(0, x, y) \dif x \dif y
        \\ &= \iint \left( \frac{1}{\sqrt{2\pi}} \int i \tau \widehat{f}(\tau, \xi, \zeta) \dif \tau \right) \, \overline{\left( \frac{1}{\sqrt{2\pi}} \int i \tau \widehat{\vec{f}}(\tau, \xi, \zeta) \dif \tau \right)} \dif \xi \dif \zeta
        \\ &= \iiint \frac{\tau}{2\pi} \left( \int \tau' \widehat{f}(\tau', \xi, \zeta) \dif \tau' \right) \, \overline{\widehat{\vec{f}}(\tau, \xi, \zeta)} \dif \tau \dif \xi \dif \zeta.
    \end{align*}

    Putting it all together, we obtain:
    \begin{align*}
        & \langle \nabla L_\mathrm{PDE}(f), \vec{f} \rangle_{H^2(\R^3)}
        \\ &= \D L_\mathrm{PDE}(f; \vec{f})
        \\ &= \iiint \left[ \tau^2 - c^2 \left( \xi^2 + \zeta^2 \right) \right]^2 \, \widehat{f}(\tau, \xi, \zeta) \overline{\widehat{\vec{f}}(\tau, \xi, \zeta)} \dif \tau \dif \xi \dif \zeta
            \\ &\quad + \iiint \left[ \frac{1}{2\pi} \int \widehat{f}(\tau', \xi, \zeta) \dif\tau' - \frac{1}{\sqrt{2\pi}} \widehat{h}(\xi, \zeta) \right] \overline{\widehat{\vec{f}}(\tau, \xi, \zeta)} \dif \tau \dif \xi \dif \zeta,
            \\ &\quad + \iiint \frac{\tau}{2\pi} \left( \int \tau' \widehat{f}(\tau', \xi, \zeta) \dif \tau' \right) \, \overline{\widehat{\vec{f}}(\tau, \xi, \zeta)} \dif \tau \dif \xi \dif \zeta.
        \\ &= \iiint \left(
            \left[ \tau^2 - c^2 \left( \xi^2 + \zeta^2 \right) \right]^2 \, \widehat{f}(\tau, \xi, \zeta) \right.
            \\ &\quad \left. + \left[ \frac{1}{2\pi} \int \widehat{f}(\tau', \xi, \zeta) \dif\tau' - \frac{1}{\sqrt{2\pi}} \widehat{h}(\xi, \zeta) \right] \right.
            \\ &\quad \left. + \frac{\tau}{2\pi} \left( \int \tau' \widehat{f}(\tau', \xi, \zeta) \dif \tau' \right)
        \right) \overline{\widehat{\vec{f}}(\tau, \xi, \zeta)} \dif \tau \dif \xi \dif \zeta;
        \\ &= \iiint \left(
            \left[ \tau^2 - c^2 \left( \xi^2 + \zeta^2 \right) \right]^2 \, \widehat{f}(\tau, \xi, \zeta) \right.
            \\ &\quad \left. + \frac{1}{2\pi} \int \widehat{f}(\tau', \xi, \zeta) \dif\tau' - \frac{1}{\sqrt{2\pi}} \widehat{h}(\xi, \zeta) \right.
            \\ &\quad \left. + \frac{\tau}{2\pi} \left( \int \tau' \widehat{f}(\tau', \xi, \zeta) \dif \tau' \right)
        \right) \overline{\widehat{\vec{f}}(\tau, \xi, \zeta)} \dif \tau \dif \xi \dif \zeta;
    \end{align*}
    therefore, we have
    {\small
    \begin{align*}
        & [ \widehat{\nabla L_\mathrm{PDE}(f)} ](\tau, \xi, \zeta)
        \\ &= \frac{
            \left[ \tau^2 - c^2 \left( \xi^2 + \zeta^2 \right) \right]^2 \, \widehat{f}(\tau, \xi, \zeta)
            + \frac{1}{2\pi} \int \widehat{f}(\tau', \xi, \zeta) \dif\tau' - \frac{1}{\sqrt{2\pi}} \widehat{h}(\xi, \zeta)
            + \frac{\tau}{2\pi} \left( \int \tau' \widehat{f}(\tau', \xi, \zeta) \dif \tau' \right)
        }{1 + (\tau^2 + \xi^2 + \zeta^2) + (\tau^2 + \xi^2 + \zeta^2)^2}.
    \end{align*}
    }
\end{proof}

\subsubsection{Learning radiance fields}\label{suppl:sec:functional-gradient-3d}

Our scene is represented as a pair of functions
\begin{align*}
    \sigma : \R^3 \to \R
    \qquad\text{and}\qquad
    c : \R^3 \times \mathbb{S}^2 \to \R^3,
\end{align*}
corresponding to the density and color components, respectively; note that the color (returned in RGB) is taken to be view-dependent, while the density is view-independent.

With this functional representation of the scene, an infinitesimal ray $\gamma : [t_\near, t_\far] \to \R^3$ with direction $\gamma_\omega := \frac{\gamma(t_\far) - \gamma(t_\near)}{\norm{\gamma(t_\far) - \gamma(t_\near)}}$ can be rendered through
\begin{align*}
    \render_{\sigma,c}(\gamma) = \int_{t_\near}^{t_\far} T_\sigma(t; \gamma) \sigma\big(\gamma(t)\big) c\big(\gamma(t), \gamma_\omega\big) \dif t + T_\sigma(t_\far; \gamma) b,
\end{align*}
where $b \in \R^3$ is a fixed background color and $T_\sigma(\cdot)$ is the transmittance, defined as
\[ T_\sigma(t; \gamma) := \exp\left( -\int_{t_\near}^t \sigma\big(\gamma(s)\big) \dif s \right). \]
To render a full image from a scene, it will furthermore be useful to refer to an \emph{antialiased} rendering operation. Given a position in the image plane $p := (u, v) \in \R^2$, with corresponding infinitesimal rays given by $\gamma_p = \gamma_{u,v}$, we render by convolution with an antialiasing kernel $\widetilde{k} : \R^2 \to \R$ as
\begin{align*}
    \widetilde{\render}_{\sigma,c}(p) = \widetilde{\render}_{\sigma,c}(u, v)
    = \iint \render_{\sigma,c}(\gamma_{u',v'}) \widetilde{k}(u' - u, v' - v) \dif u' \dif v';
\end{align*}
Ideally $\widetilde{k}$ would be taken to be a sinc kernel so as to compensate for the sampling of the signal, though in practice it would be a more computable choice such as a bicubic filter.
Note that this can also be seen as a continuous version of supersampling antialiasing.

With these in hand, our goal is to solve a functional optimization problem
\begin{align}
    \argmin_{\sigma(\cdot),\,c(\cdot, \cdot)} L(\sigma, c) \qquad\text{with}\qquad L(\sigma, c) := \frac{1}{N} \sum_{i=1}^n \frac{1}{P} \sum_{j=1}^P \frac{1}{2} \norm{ \widetilde{\render}_{\sigma,c}(p_{i,j}) - Y_{i,j} }^2,
\end{align}
over $N$ training cameras\&images, each with $P$ pixels; here $p_{i,j}$ corresponds to the position in the pixel plane, and $Y_{i,j} \in \R^3$ corresponds to the color of the $j$-th pixel of the $i$-th image.
A position in pixel plane $p = (u,v)$ is mapped to the ray
\[ \gamma(t) = o + t A (u/f, -v/f, -1), \]
where $A \in \R^{3 \times 3}$ is the camera transformation matrix, $f$ is the focal length and $o \in \R^3$ is the camera position.

To treat this as an optimization problem in function space, we must first consider the domain of the loss functional $L(\cdot, \cdot)$, so as to ensure that functional gradients are well-defined.
To this end, let us first assume that the 3D domain of the density and color functions is contained to a $\Omega \subset \R^3$ that is compact (i.e., bounded and closed), endowed with the Lebesgue measure over $\Omega$.
We then take the density function to reside in standard $L^2$ space over $\Omega$, denoted $L^2(\Omega \to \R)$;
the color function, on the other hand, has to be taken to be on a ``mixture'' of being $L^2$ in the position $x \in \Omega \subset \R^3$ but following a more tame RKHS structure in the direction $\omega \in S^2$; this can be done by writing the color in curried form
\[ c : \Omega \to [\mathcal{H}^\mathbb{S}]^3, \]
where $\mathcal{H}^\mathbb{S}$ is a reproducing kernel Hilbert space (RKHS) of functions from the unit sphere $S^2$ to $\R$.
This is necessary so as to make the loss functional $L$ well-defined over a Hilbert space of functions.
The RKHS $\mathcal{H}^\mathbb{S}$ comes endowed with a reproducing kernel $K_{\mathcal{H}^\mathbb{S}} : \mathcal{H}^\mathbb{S} \times \mathcal{H}^\mathbb{S} \to \R$, defined by the reproducing property that $\inner{ h, K_{\mathcal{H}^\mathbb{S}}(\omega, \cdot) } = \inner{ h, K_{\mathcal{H}^\mathbb{S}}(\cdot, \omega) } = h(\omega)$ for any $h \in \mathcal{H}^\mathbb{S}$ and $\omega \in S^2$.

\begin{proposition}\label{thm:gradient-derivation}
    Suppose the antialiasing kernel $\widetilde{k}(\cdot, \cdot)$ has a non-empty support, and define the antialiased color differences
    \[ \widetilde{D}_{\sigma,c}(x; i,j) := \sum_{j=1}^P ( \widetilde{\render}_{\sigma,c}(p_{i,j}) - Y_{i,j} ) \, \widetilde{k}(p(x;i) - p_{i,j}). \]
    Then the loss functional $L : L^2(\Omega \to \R) \times L^2(\Omega \to [\mathcal{H}^\mathbb{S}]^3) \to \R$ is Fréchet differentiable, with gradients given by
    {\small
    \begin{align*}
        \nabla_\sigma L(\sigma, c)(x) &= \frac{1}{N} \sum_{i=1}^N a_i \sum_{k=1}^3 [\widetilde{D}_{\sigma,c}(x; i,j)]_k \biggl(
            T_\sigma(x; i) [c(x, \omega(x;i))]_k
            - \bigl( \render_{\sigma,c}(x; i) - \render^{[t_\near, t]}_{\sigma,c}(x; i) \bigr)
        \biggr),
        \\
        \nabla_{c_k} L(\sigma, c)(x, \omega) &= \frac{1}{N} \sum_{i=1}^N a_i [\widetilde{D}_{\sigma,c}(x; i,j)]_k T_\sigma(x; i) \sigma(x) K_{\mathcal{H}^\mathbb{S}}(\omega(x; i), \omega),
    \end{align*}
    }
    for constants $a_i := f_i^2 \ind[t(x;i) \in F] / P t(x;i)^2 \lvert\det A_i\rvert$.
\end{proposition}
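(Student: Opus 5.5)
The plan is to compute the directional derivative of $L$ in $\sigma$ and in $c$ separately. In each case I will rewrite the derivative as an $L^2(\Omega)$ inner product (for $\sigma$) or as an $L^2(\Omega\to[\mathcal{H}^\mathbb{S}]^3)$ inner product (for $c$), and then read off the gradient by Riesz, exactly as in the proofs of Proposition~\ref{prop:gradient-kernel} and Proposition~\ref{prop:gradient-pde}.

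\textbf{Outer chain rule.} Differentiating the squared error gives
\[ \D L(\sigma,c;\vec\sigma,\vec c)=\frac1N\sum_{i}\frac1P\sum_j \bigl\langle \widetilde{\render}_{\sigma,c}(p_{i,j})-Y_{i,j},\, \D\widetilde{\render}_{\sigma,c}(p_{i,j};\vec\sigma,\vec c)\bigr\rangle_{\R^3}. \]
Since $\widetilde{\render}$ is a convolution of $\render$ with $\widetilde k$, the derivative passes under the $(u',v')$ integral. This reduces everything to the directional derivative of $\render_{\sigma,c}(\gamma)$ for a single ray.

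\textbf{Single-ray derivatives.} In $c$, $\render$ is linear in $c$, so the derivative is $\int T_\sigma\,\sigma\,\vec c(\gamma(t),\gamma_\omega)\dif t$. By the reproducing property, $\vec c_k(x,\omega)=\langle \vec c_k(x),K_{\mathcal{H}^\mathbb{S}}(\omega,\cdot)\rangle$. In $\sigma$, differentiating the transmittance gives $\D T_\sigma(t;\vec\sigma)=-T_\sigma(t)\int_{t_\near}^t\vec\sigma(\gamma(s))\dif s$. Applying Fubini to swap the $s$ and $t$ integrals turns the ray derivative into
\[ \int \vec\sigma(\gamma(s))\Bigl[T_\sigma(s)c(\gamma(s),\gamma_\omega)-\bigl(\render-\render^{[t_\near,s]}\bigr)\Bigr]\dif s, \]
where the background term is absorbed into $\render$.

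\textbf{Change of variables.} Each line integral still has to be turned into a volume integral over $x\in\Omega$. For camera $i$, the map $(u',v',t)\mapsto x=o_i+tA_i(u'/f_i,-v'/f_i,-1)$ is a diffeomorphism onto the frustum. Its Jacobian determinant is $t^2\lvert\det A_i\rvert/f_i^2$, and it has inverse $x\mapsto(p(x;i),t(x;i))$. After this substitution, the double integral $\iint\dif u'\dif v'\int\dif t$ becomes $\int_\Omega\dif x$ with the factor $a_i$; the indicator restricts to $t(x;i)$ within the ray bounds $F$. The sum over $j$ of residuals times $\widetilde k(p(x;i)-p_{i,j})$ is exactly $\widetilde D_{\sigma,c}(x;i,j)$. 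The result is $\langle \nabla_\sigma L,\vec\sigma\rangle_{L^2}$ and $\int_\Omega\langle\nabla_{c_k}L(x,\cdot),\vec c_k(x)\rangle_{\mathcal{H}^\mathbb{S}}\dif x$, with the stated formulas.

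\textbf{Main obstacle: Fréchet differentiability.} Beyond Gâteaux differentiability, I need (i) that these linear functionals are bounded, and (ii) continuity in $(\sigma,c)$, per the paper's definition. Boundedness requires $\widetilde k$ bounded with compact support, so that the pixel integrals are finite, and $\Omega$ compact. Together these give bounded $t$ and $a_i$, so the gradients lie in $L^2$. The delicate point is that $\sigma\in L^2$ is only defined almost everywhere, so its restriction to a single ray is meaningless. Only the antialiased integral $\iint\dif u'\dif v'$ makes $\int\sigma(\gamma(t))\dif t$ well defined as a function of $(u',v')$. This is precisely why the nonempty support of $\widetilde k$ is assumed.

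I would handle this by bounding $T_\sigma\le1$ when $\sigma\ge0$, or more generally on bounded sets. The exponential is locally Lipschitz, which yields continuity of the derivative via dominated convergence and Cauchy–Schwarz in the change-of-variables measure. I expect this step to need the most care.
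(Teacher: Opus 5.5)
Your computation of the two gradient formulas follows the paper's proof step for step. It uses the chain rule through the squared residual, differentiation under the antialiasing integral, $\D T_\sigma(t;\vec\sigma)=-T_\sigma(t)\int_{t_\near}^t\vec\sigma(\gamma(s))\,\dif s$, and the Fubini swap producing $T_\sigma c-(\render-\render^{[t_\near,t]})$ with the background absorbed. It then applies the change of variables $(u',v',t)\mapsto o_i+tA_i(u'/f_i,-v'/f_i,-1)$ with Jacobian determinant $\det(A_i)\,t^2/f_i^2$ (a separate lemma in the paper), followed by Riesz. One small point: bounding $a_i\propto t^{-2}$ needs $t_\near>0$, not just bounded $t$.

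The gap is the Fréchet step. The paper settles it in one sentence, and your plan does not close it either, because on the stated spaces the claim is false. Your step ``bounded $t$ and $a_i$, so the gradients lie in $L^2$'' fails. For $\sigma\ge0$, the back-projected term $\render-\render^{[t_\near,t]}=\int_t^{t_\far}T_\sigma\sigma c\,\dif s+T_\sigma(t_\far)b$ is controlled, via $\int_t^{t_\far}T_\sigma\sigma\,\dif s\le1$, only by the essential supremum of $\lvert c\rvert$ along the ray. That supremum need not be square integrable across rays when $c$ is merely $L^2$; Cauchy--Schwarz only puts $\sigma c$ in $L^1$.

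Concretely, let $r$ be the image-plane distance to a point near which $\widetilde k(\cdot-p_{i,j})$ is bounded away from zero. Fix $h\in[\mathcal{H}^\mathbb{S}]^3$, and set $\sigma=r^{-3/2}$ and $c=r^{-1}h$ on the segment $t\in[t_1,t_1+r^{3/2}]$ of each nearby ray, zero elsewhere. Then $\lVert\sigma\rVert_{L^2}^2\asymp\int r^{-1/2}\dif r$ and $\lVert c\rVert_{L^2}^2\asymp\int r^{1/2}\dif r$ are finite. Each segment has opacity $1-e^{-1}$, so $\render\asymp r^{-1}$ is integrable against $\widetilde k$ and the loss is finite. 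Yet in front of the segments $T_\sigma c=0$ and $\nabla_\sigma L\asymp r^{-1}$ (for generic $Y_{i,j}$), while $\int r^{-2}\,r\,\dif r=\infty$. So $\D L(\sigma,c;\cdot)$ is unbounded on $L^2(\Omega)$ at an admissible nonnegative density. No dominated-convergence argument can yield Fréchet differentiability there.

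Your fallback ``or more generally on bounded sets'' also fails for sign-changing densities. Adding $-\varepsilon r^{-1/2}$ along all rays near such a point costs only $O(\varepsilon)$ in $L^2$. But it multiplies $T_\sigma(t_\far)$ by $e^{\varepsilon(t_\far-t_\near)r^{-1/2}}$, which is not integrable against $\widetilde k$. So, generically, $L=+\infty$ at points arbitrarily $L^2$-close to any given one, and $L$ is not even finite on an open subset of $L^2(\Omega)\times L^2(\Omega\to[\mathcal{H}^\mathbb{S}]^3)$.

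What your computation, and the paper's, does yield is the formula for $\D L$ along directions in which the integrals converge. A genuine Fréchet statement needs extra hypotheses or a smaller domain. One option is nonnegative (or bounded-below) densities and essentially bounded colors with a bounded kernel, with the topology chosen so that nearby points stay in that class. Then $\lvert\render-\render^{[t_\near,t]}\rvert$ is uniformly bounded, the formulas define honest $L^2$ elements, and an argument of the kind you sketch becomes viable.
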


Note that when the antialiasing kernel $\widetilde{k}$ has compact support, $\widetilde{D}_{\sigma,c}(x; i,j)$ can be efficiently computed by considering just a neighborhood around the $j$-th pixel.

\begin{proof}[Proof of Proposition~\ref{thm:gradient-derivation}]
    Let us start by deriving the directional (Gâteaux) derivative.
    It is handy to rewrite it as a scalar derivative evaluated at zero:
    \[
        \D L(\sigma, c; \vec{\sigma}, \vec{c})
        = \lim_{\delta \searrow 0} \frac{L(\sigma + \delta \vec{\sigma}, c + \delta \vec{c}) - L(\sigma, c)}{\delta}
        = \left[ \frac{\dif}{\dif \delta} L(\sigma + \delta \vec{\sigma}, c + \delta \vec{c}) \right]_{\delta=0};
    \]
    It then follows by straightforward computation:
    \begin{align*}
        & \left[ \frac{\dif}{\dif \delta} L(\sigma + \delta \vec{\sigma}, c + \delta \vec{c}) \right]_{\delta=0}
        \\ &= \left[ \frac{\dif}{\dif \delta} \frac{1}{N} \sum_{i=1}^N \frac{1}{P} \sum_{j=1}^P \frac{1}{2} \norm{ \widetilde{\render}_{\sigma + \delta \vec{\sigma}, c + \delta \vec{c}}(p_{i,j}) - Y_{i,j} }^2 \right]_{\delta=0}
        \\ &= \frac{1}{N} \sum_{i=1}^N \frac{1}{P} \sum_{j=1}^P \left[ \frac{\dif}{\dif \delta} \frac{1}{2} \norm{ \widetilde{\render}_{\sigma + \delta \vec{\sigma}, c + \delta \vec{c}}(p_{i,j}) - Y_{i,j} }^2 \right]_{\delta=0}
        \\ &= \frac{1}{N} \sum_{i=1}^N \frac{1}{P} \sum_{j=1}^P \sum_{k=1}^3 ( \widetilde{\render}_{\sigma,c}(p_{i,j}) - Y_{i,j} )_k \left[ \frac{\dif}{\dif \delta} [\widetilde{\render}_{\sigma + \delta \vec{\sigma}, c + \delta \vec{c}}(p_{i,j})]_k \right]_{\delta=0}
        \\ &= \frac{1}{N} \sum_{i=1}^N \frac{1}{P} \sum_{j=1}^P \sum_{k=1}^3 ( \widetilde{\render}_{\sigma,c}(p_{i,j}) - Y_{i,j} )_k \iint \left[ \frac{\dif}{\dif \delta} [\render_{\sigma+\delta \vec{\sigma},c+\delta \vec{c}}(\gamma_{u',v'})]_k \right]_{\delta=0} \widetilde{k}(u' - u, v' - v) \dif u' \dif v'.
    \end{align*}
    Now, the remaining derivative is as follows. For convenience, we let $\gamma \equiv \gamma_{u',v'}$.
    {\small
    \begin{align*}
        & \left[ \frac{\dif}{\dif \delta} [\render_{\sigma + \delta \vec{\sigma}, c + \delta \vec{c}}(\gamma)]_k \right]_{\delta=0}
        \\ &= \left[
            \frac{\dif}{\dif \delta} \int_{t_\near}^{t_\far}
                T_{\sigma + \delta \vec{\sigma}}(t;\gamma)
                \bigl(\sigma(\gamma(t)) + \delta \vec{\sigma}(\gamma(t))\bigr)
                \inner{ [c(\gamma(t))]_k + \delta [\vec{c}(\gamma(t))]_k, K_{\mathcal{H}^\mathbb{S}}(\gamma_\omega, \cdot) }
                \dif t
            \right.
            \\ &\qquad \left. + \frac{\dif}{\dif \delta} b_k T_{\sigma + \delta \vec{\sigma}}(t_\far;\gamma)
        \right]_{\delta=0}
        \\ &= \left[
            \frac{\dif}{\dif \delta} \int_{t_\near}^{t_\far}
                T_{\sigma}(t;\gamma) T_{\vec{\sigma}}(t;\gamma)^\delta
                \bigl(\sigma(\gamma(t)) + \delta \vec{\sigma}(\gamma(t))\bigr)
                \inner{ [c(\gamma(t))]_k + \delta [\vec{c}(\gamma(t))]_k, K_{\mathcal{H}^\mathbb{S}}(\gamma_\omega, \cdot) }
                \dif t \right.
            \\ &\qquad \left. + \frac{\dif}{\dif \delta} b_k T_{\sigma}(t_\far;\gamma) T_{\vec{\sigma}}(t_\far;\gamma)^\delta
        \right]_{\delta=0}
        \\ &=
            \int_{t_\near}^{t_\far}
                T_{\sigma}(t;\gamma)
                \left[ \frac{\dif}{\dif \delta}
                    T_{\vec{\sigma}}(t;\gamma)^\delta
                    (\sigma(\gamma(t)) + \delta \vec{\sigma}(\gamma(t)))
                    \inner{ [c(\gamma(t))]_k + \delta [\vec{c}(\gamma(t))]_k, K_{\mathcal{H}^\mathbb{S}}(\gamma_\omega, \cdot) }
                \right]_{\delta=0}
                \dif t
            \\ &\qquad + b_k T_{\sigma}(t_\far;\gamma) \left[ \frac{\dif}{\dif \delta} T_{\vec{\sigma}}(t_\far;\gamma)^\delta \right]_{\delta=0}
        \\ &=
            \int_{t_\near}^{t_\far}
                T_{\sigma}(t; \gamma)
                \biggl[
                    \bigl(\sigma(\gamma(t)) + \delta \vec{\sigma}(\gamma(t))\bigr)
                    \inner{ [c(\gamma(t))]_k + \delta [\vec{c}(\gamma(t))]_k, K_{\mathcal{H}^\mathbb{S}}(\gamma_\omega, \cdot) }
                    \log T_{\vec{\sigma}}(t;\gamma)
                    \\ &\qquad\qquad\qquad\quad + \frac{\dif}{\dif \delta}
                    \bigl(\sigma(\gamma(t)) + \delta \vec{\sigma}(\gamma(t))\bigr)
                    \inner{ [c(\gamma(t))]_k + \delta [\vec{c}(\gamma(t))]_k, K_{\mathcal{H}^\mathbb{S}}(\gamma_\omega, \cdot) }
                \biggr]_{\delta=0}
                \dif t
            \\ &\qquad + b_k T_{\sigma}(t_\far;\gamma) \log T_{\vec{\sigma}}(t_\far;\gamma)
        \\ &=
            \int_{t_\near}^{t_\far}
                T_{\sigma}(t;\gamma)
                \biggl[
                    \sigma(\gamma(t))
                    \langle [c(\gamma(t))]_k, K_{\mathcal{H}^\mathbb{S}}(\gamma_\omega, \cdot) \rangle
                    \log T_{\sigma}(t;\gamma)
                    \\ &\qquad\qquad\quad + \frac{\dif}{\dif \delta}
                    (\sigma(\gamma(t)) + \delta \vec{\sigma}(\gamma(t)))
                    \langle [c(\gamma(t))]_k + \delta [\vec{c}(\gamma(t))]_k, K_{\mathcal{H}^\mathbb{S}}(\gamma_\omega, \cdot) \rangle
                \biggr]_{\delta=0}
                \dif t
            \\ &\qquad + b_k T_{\sigma}(t_\far;\gamma) \log T_{\vec{\sigma}}(t_\far;\gamma)
        \\ &=
            \int_{t_\near}^{t_\far}
                T_{\sigma}(t;\gamma)
                \biggl[
                    \sigma(\gamma(t))
                    \langle [c(\gamma(t))]_k, K_{\mathcal{H}^\mathbb{S}}(\gamma_\omega, \cdot) \rangle
                    \log T_{\vec{\sigma}}(t;\gamma)
                    \\ &\qquad\qquad\qquad\qquad
                    + \bigl(\sigma(\gamma(t)) + \delta \vec{\sigma}(\gamma(t))\bigr) \langle [\vec{c}(\gamma(t))]_k, K_{\mathcal{H}^\mathbb{S}}(\gamma_\omega, \cdot) \rangle
                    \\ &\qquad\qquad\qquad\qquad
                    + \langle [c(\gamma(t))]_k + \delta [\vec{c}(\gamma(t))]_k, K_{\mathcal{H}^\mathbb{S}}(\gamma_\omega, \cdot) \rangle \vec{\sigma}(\gamma(t))
                \biggr]_{\delta=0}
                \dif t
            \\ &\qquad + b_k T_{\sigma}(t_\far;\gamma) \log T_{\vec{\sigma}}(t_\far;\gamma)
        \\ &=
            \int_{t_\near}^{t_\far}
                T_{\sigma}(t;\gamma)
                \biggl(
                    \sigma(\gamma(t))
                    \langle [c(\gamma(t))]_k, K_{\mathcal{H}^\mathbb{S}}(\gamma_\omega, \cdot) \rangle
                    \log T_{\vec{\sigma}}(t;\gamma)
                    \\ &\qquad\qquad\qquad\qquad
                    + \sigma(\gamma(t)) \langle [\vec{c}(\gamma(t))]_k, K_{\mathcal{H}^\mathbb{S}}(\gamma_\omega, \cdot) \rangle
                    \\ &\qquad\qquad\qquad\qquad
                    + \langle [c(\gamma(t))]_k, K_{\mathcal{H}^\mathbb{S}}(\gamma_\omega, \cdot) \rangle \vec{\sigma}(\gamma(t))
                \biggr)
                \dif t
            + b_k T_{\sigma}(t_\far;\gamma) \log T_{\vec{\sigma}}(t_\far;\gamma)
        \\ &=
            \int_{t_\near}^{t_\far}
                T_{\sigma}(t;\gamma)
                \biggl(
                    -\sigma(\gamma(t))
                    \langle [c(\gamma(t))]_k, K_{\mathcal{H}^\mathbb{S}}(\gamma_\omega, \cdot) \rangle
                    \int_{t_\near}^t \vec{\sigma}(\gamma(s)) \dif s
                    \\ &\qquad\qquad\qquad\qquad
                    + \sigma(\gamma(t)) \langle [\vec{c}(\gamma(t))]_k, K_{\mathcal{H}^\mathbb{S}}(\gamma_\omega, \cdot) \rangle
                    \\ &\qquad\qquad\qquad\qquad
                    + \langle [c(\gamma(t))]_k, K_{\mathcal{H}^\mathbb{S}}(\gamma_\omega, \cdot) \rangle \vec{\sigma}(\gamma(t))
                \biggr)
                \dif t
            - b_k T_{\sigma}(t_\far;\gamma) \int_{t_\near}^{t_\far} \vec{\sigma}(\gamma(s)) \dif s.
        \\ &=
            \int_{t_\near}^{t_\far}
                T_{\sigma}(t;\gamma)
                \langle [c(\gamma(t))]_k, K_{\mathcal{H}^\mathbb{S}}(\gamma_\omega, \cdot) \rangle \vec{\sigma}(\gamma(t))
            \dif t
            \\ &\qquad - \int_{t_\near}^{t_\far}
                T_{\sigma}(t;\gamma)
                \sigma(\gamma(t))
                \langle [c(\gamma(t))]_k, K_{\mathcal{H}^\mathbb{S}}(\gamma_\omega, \cdot) \rangle
                \int_{t_\near}^t \vec{\sigma}(\gamma(s)) \dif s
            \dif t
            \\ &\qquad - b_k T_{\sigma}(t_\far;\gamma) \int_{t_\near}^{t_\far} \vec{\sigma}(\gamma(s)) \dif s
            + \int_{t_\near}^{t_\far}
                T_{\sigma}(t;\gamma)
                \sigma(\gamma(t)) \langle [\vec{c}(\gamma(t))]_k, K_{\mathcal{H}^\mathbb{S}}(\gamma_\omega, \cdot) \rangle
            \dif t
    \end{align*}
    }
    Now, a slightly tricky step: in the second term, let's swap the integrals.
    {\small
    \begin{align*}
        &
            \int_{t_\near}^{t_\far}
                T_{\sigma}(t;\gamma)
                \langle [c(\gamma(t))]_k, K_{\mathcal{H}^\mathbb{S}}(\gamma_\omega, \cdot) \rangle \vec{\sigma}(\gamma(t))
            \dif t
            \\ &\qquad - \int_{t_\near}^{t_\far}
                T_{\sigma}(t;\gamma)
                \sigma(\gamma(t))
                \langle [c(\gamma(t))]_k, K_{\mathcal{H}^\mathbb{S}}(\gamma_\omega, \cdot) \rangle
                \int_{t_\near}^t \vec{\sigma}(\gamma(s)) \dif s
            \dif t
            \\ &\qquad - b_k T_{\sigma}(t_\far;\gamma) \int_{t_\near}^{t_\far} \vec{\sigma}(\gamma(s)) \dif s
            + \int_{t_\near}^{t_\far}
                T_{\sigma}(t;\gamma)
                \sigma(\gamma(t)) \langle [\vec{c}(\gamma(t))]_k, K_{\mathcal{H}^\mathbb{S}}(\gamma_\omega, \cdot) \rangle
            \dif t
        \\ &=
            \int_{t_\near}^{t_\far}
                T_{\sigma}(t;\gamma)
                \langle [c(\gamma(t))]_k, K_{\mathcal{H}^\mathbb{S}}(\gamma_\omega, \cdot) \rangle \vec{\sigma}(\gamma(t))
            \dif t
            \\ &\qquad - \int_{t_\near}^{t_\far} \int_{t_\near}^t
                T_{\sigma}(t;\gamma)
                \sigma(\gamma(t))
                \langle [c(\gamma(t))]_k, K_{\mathcal{H}^\mathbb{S}}(\gamma_\omega, \cdot) \rangle
                \vec{\sigma}(\gamma(s))
            \dif s \dif t
            \\ &\qquad - b_k T_{\sigma}(t_\far;\gamma) \int_{t_\near}^{t_\far} \vec{\sigma}(\gamma(s)) \dif s
            + \int_{t_\near}^{t_\far}
                T_{\sigma}(t;\gamma)
                \sigma(\gamma(t)) \langle [\vec{c}(\gamma(t))]_k, K_{\mathcal{H}^\mathbb{S}}(\gamma_\omega, \cdot) \rangle
            \dif t
        \\ &=
            \int_{t_\near}^{t_\far}
                T_{\sigma}(t;\gamma)
                \langle [c(\gamma(t))]_k, K_{\mathcal{H}^\mathbb{S}}(\gamma_\omega, \cdot) \rangle \vec{\sigma}(\gamma(t))
            \dif t
            \\ &\qquad - \int_{t_\near}^{t_\far} \int_{s}^{t_\far}
                T_{\sigma}(t;\gamma)
                \sigma(\gamma(t))
                \langle [c(\gamma(t))]_k, K_{\mathcal{H}^\mathbb{S}}(\gamma_\omega, \cdot) \rangle
                \vec{\sigma}(\gamma(s))
            \dif t \dif s
            \\ &\qquad - b_k T_{\sigma}(t_\far;\gamma) \int_{t_\near}^{t_\far} \vec{\sigma}(\gamma(s)) \dif s
            + \int_{t_\near}^{t_\far}
                T_{\sigma}(t;\gamma)
                \sigma(\gamma(t)) \langle [\vec{c}(\gamma(t))]_k, K_{\mathcal{H}^\mathbb{S}}(\gamma_\omega, \cdot) \rangle
            \dif t
        \\ &=
            \int_{t_\near}^{t_\far}
                T_{\sigma}(t;\gamma)
                \langle [c(\gamma(t))]_k, K_{\mathcal{H}^\mathbb{S}}(\gamma_\omega, \cdot) \rangle \vec{\sigma}(\gamma(t))
            \dif t
            \\ &\qquad - \int_{t_\near}^{t_\far} \int_{t}^{t_\far}
                T_{\sigma}(s;\gamma)
                \sigma(\gamma(s))
                \langle [c(\gamma(s))]_k, K_{\mathcal{H}^\mathbb{S}}(\gamma_\omega, \cdot) \rangle
                \vec{\sigma}(\gamma(t))
            \dif s \dif t
            \\ &\qquad - b_k T_{\sigma}(t_\far;\gamma) \int_{t_\near}^{t_\far} \vec{\sigma}(\gamma(s)) \dif s
            + \int_{t_\near}^{t_\far}
                T_{\sigma}(t;\gamma)
                \sigma(\gamma(t)) \langle [\vec{c}(\gamma(t))]_k, K_{\mathcal{H}^\mathbb{S}}(\gamma_\omega, \cdot) \rangle
            \dif t.
    \end{align*}
    }
    And now, we can turn this all into an $L^2$ inner product.
    {\small
    \begin{align*}
        &
            \int_{t_\near}^{t_\far}
                T_{\sigma}(t;\gamma)
                \langle [c(\gamma(t))]_k, K_{\mathcal{H}^\mathbb{S}}(\gamma_\omega, \cdot) \rangle \vec{\sigma}(\gamma(t))
            \dif t
            \\ &\qquad - \int_{t_\near}^{t_\far} \int_{t}^{t_\far}
                T_{\sigma}(s;\gamma)
                \sigma(\gamma(s))
                \langle [c(\gamma(s))]_k, K_{\mathcal{H}^\mathbb{S}}(\gamma_\omega, \cdot) \rangle
                \vec{\sigma}(\gamma(t))
            \dif s \dif t
            \\ &\qquad - b_k T_{\sigma}(t_\far;\gamma) \int_{t_\near}^{t_\far} \vec{\sigma}(\gamma(s)) \dif s
            + \int_{t_\near}^{t_\far}
                T_{\sigma}(t;\gamma)
                \sigma(\gamma(t)) \langle [\vec{c}(\gamma(t))]_k, K_{\mathcal{H}^\mathbb{S}}(\gamma_\omega, \cdot) \rangle
            \dif t.
        \\ &=
            \int_{t_\near}^{t_\far} \Biggl(
                T_{\sigma}(t;\gamma)
                \langle [c(\gamma(t))]_k, K_{\mathcal{H}^\mathbb{S}}(\gamma_\omega, \cdot) \rangle \vec{\sigma}(\gamma(t))
                \\ &\qquad\qquad\quad - \int_{t}^{t_\far} T_{\sigma}(s;\gamma)
                \sigma(\gamma(s))
                \langle [c(\gamma(s))]_k, K_{\mathcal{H}^\mathbb{S}}(\gamma_\omega, \cdot) \rangle
                \vec{\sigma}(\gamma(t))
                \dif s
                \\ &\qquad\qquad\quad - b_k T_{\sigma}(t_\far;\gamma) \vec{\sigma}(\gamma(t))
            \Biggr) \dif t
            \\ &\qquad + \int_{t_\near}^{t_\far}
                T_{\sigma}(t;\gamma)
                \sigma(\gamma(t)) \langle [\vec{c}(\gamma(t))]_k, K_{\mathcal{H}^\mathbb{S}}(\gamma_\omega, \cdot) \rangle
            \dif t
        \\ &=
            \int_{t_\near}^{t_\far} \Biggl(
                T_{\sigma}(t;\gamma)
                \langle [c(\gamma(t))]_k, K_{\mathcal{H}^\mathbb{S}}(\gamma_\omega, \cdot) \rangle
                \\ &\qquad\qquad\quad - \int_{t}^{t_\far} T_{\sigma}(s;\gamma)
                \sigma(\gamma(s))
                \langle [c(\gamma(s))]_k, K_{\mathcal{H}^\mathbb{S}}(\gamma_\omega, \cdot) \rangle
                \dif s
                - b_k T_{\sigma}(t_\far;\gamma)
            \Biggr) \vec{\sigma}(\gamma(t)) \dif t
            \\ &\qquad + \int_{t_\near}^{t_\far}
                T_{\sigma}(t;\gamma)
                \sigma(\gamma(t)) \langle [\vec{c}(\gamma(t))]_k, K_{\mathcal{H}^\mathbb{S}}(\gamma_\omega, \cdot) \rangle
            \dif t
        \\ &=
            \int_{t_\near}^{t_\far} \Biggl(
                T_{\sigma}(t;\gamma)
                \langle [c(\gamma(t))]_k, K_{\mathcal{H}^\mathbb{S}}(\gamma_\omega, \cdot) \rangle
                \\ &\qquad\qquad\quad - \int_{t}^{t_\far} T_{\sigma}(s;\gamma)
                \sigma(\gamma(s))
                \langle [c(\gamma(s))]_k, K_{\mathcal{H}^\mathbb{S}}(\gamma_\omega, \cdot) \rangle
                \dif s
                - b_k T_{\sigma}(t_\far;\gamma)
            \Biggr) \vec{\sigma}(\gamma(t)) \dif t
            \\ &\qquad + \int_{t_\near}^{t_\far}
                \langle [\vec{c}(\gamma(t))]_k, T_{\sigma}(t;\gamma) \sigma(\gamma(t)) K_{\mathcal{H}^\mathbb{S}}(\gamma_\omega, \cdot) \rangle
            \dif t
    \end{align*}
    }
    Plugging this back into the full directional derivative:
    {\small
    \begin{align*}
        & \D L(\sigma, c; \vec{\sigma}, \vec{c})
        \\ &= \frac{1}{N} \sum_{i=1}^N \frac{1}{P} \sum_{j=1}^P \sum_{k=1}^3 ( \widetilde{\render}_{\sigma,c}(p_{i,j}) - Y_{i,j} )_k \iint \left[ \frac{\dif}{\dif \delta} [\render_{\sigma+\delta \vec{\sigma},c+\delta \vec{c}}(\gamma_{u',v'})]_k \right]_{\delta=0} \widetilde{k}(u' - u, v' - v) \dif u' \dif v'
        \\ &= \frac{1}{N} \sum_{i=1}^N \frac{1}{P} \sum_{j=1}^P \sum_{k=1}^3 ( \widetilde{\render}_{\sigma,c}(p_{i,j}) - Y_{i,j} )_k \iint \Biggl(
            \\ &\qquad \int_{t_\near}^{t_\far}
                \langle [\vec{c}(\gamma_{u',v'}(t))]_k, T_{\sigma}(t;\gamma_{u',v'}) \sigma(\gamma_{u',v'}(t)) K_{\mathcal{H}^\mathbb{S}}((\gamma_{u',v'})_\omega, \cdot) \rangle
            \dif t
            \\ &\qquad
                + \int_{t_\near}^{t_\far} \biggl(
                    T_{\sigma}(t;\gamma_{u',v'})
                    \langle [c(\gamma_{u',v'}(t))]_k, K_{\mathcal{H}^\mathbb{S}}((\gamma_{u',v'})_\omega, \cdot) \rangle
                    \\ &\qquad - \int_{t}^{t_\far} T_{\sigma}(s;\gamma_{u',v'})
                    \sigma(\gamma_{u',v'}(s))
                    \langle [c(\gamma_{u',v'}(s))]_k, K_{\mathcal{H}^\mathbb{S}}((\gamma_{u',v'})_\omega, \cdot) \rangle
                    \dif s
                    \\ &\qquad - b_k T_{\sigma}(t_\far;\gamma_{u',v'})
                \biggr) \vec{\sigma}(\gamma_{u',v'}(t)) \dif t
            \Biggr) \widetilde{k}(u'-u, v'-v) \dif u' \dif v'
        \\ &= \frac{1}{N} \sum_{i=1}^N \frac{1}{P} \sum_{j=1}^P \sum_{k=1}^3 ( \widetilde{\render}_{\sigma,c}(p_{i,j}) - Y_{i,j} )_k \iint \biggl(
            \\ &\quad \int_{t_\near}^{t_\far}
                \langle [\vec{c}(\gamma_{u',v'}(t))]_k, T_{\sigma}(t;\gamma_{u',v'}) \sigma(\gamma_{u',v'}(t)) K_{\mathcal{H}^\mathbb{S}}((\gamma_{u',v'})_\omega, \cdot) \rangle
            \dif t \biggr) \widetilde{k}(u'-u,v'-v) \dif u' \dif v'
        \\ &\quad + \frac{1}{N} \sum_{i=1}^N \frac{1}{P} \sum_{j=1}^P \sum_{k=1}^3 ( \widetilde{\render}_{\sigma,c}(p_{i,j}) - Y_{i,j} )_k
            \\ &\qquad\qquad\qquad\qquad \cdot \iint \biggl(
            \int_{t_\near}^{t_\far} \Biggl(
                T_{\sigma}(t;\gamma_{u',v'})
                \langle [c(\gamma_{u',v'}(t))]_k, K_{\mathcal{H}^\mathbb{S}}((\gamma_{u',v'})_\omega, \cdot) \rangle
                \\ &\qquad\qquad - \int_{t}^{t_\far} T_{\sigma}(s;\gamma_{u',v'})
                \sigma(\gamma_{u',v'}(s))
                \langle [c(\gamma_{u',v'}(s))]_k, K_{\mathcal{H}^\mathbb{S}}((\gamma_{u',v'})_\omega, \cdot) \rangle
                \dif s
                \\ &\qquad\qquad - b_k T_{\sigma}(t_\far;\gamma_{u',v'})
            \Biggr) \vec{\sigma}(\gamma_{u',v'}(t)) \dif t
        \biggr) \widetilde{k}(u'-u,v'-v) \dif u' \dif v'.
        \\ &= \frac{1}{N} \sum_{i=1}^N \frac{1}{P} \sum_{j=1}^P \sum_{k=1}^3 \left[ ( \widetilde{\render}_{\sigma,c}(p_{i,j}) - Y_{i,j} )_k \right.
            \\ &\qquad \left. \cdot \iint \biggl(
            \int_{t_\near}^{t_\far}
                \langle [\vec{c}(\gamma_{u',v'}(t))]_k, T_{\sigma}(t;\gamma_{u',v'}) \sigma(\gamma_{u',v'}(t)) K_{\mathcal{H}^\mathbb{S}}((\gamma_{u',v'})_\omega, \cdot) \rangle
            \dif t \biggr) \widetilde{k}(u'-u,v'-v) \dif u' \dif v' \right]
        \\ &\quad + \frac{1}{N} \sum_{i=1}^N \frac{1}{P} \sum_{j=1}^P \sum_{k=1}^3 ( \widetilde{\render}_{\sigma,c}(p_{i,j}) - Y_{i,j} )_k \Biggl(
            \\ &\qquad \iint \biggl(
            \int_{t_\near}^{t_\far}
                T_{\sigma}(t;\gamma_{u',v'}) \vec{\sigma}(\gamma_{u',v'}(t))
                \langle [c(\gamma_{u',v'}(t))]_k, K_{\mathcal{H}^\mathbb{S}}((\gamma_{u',v'})_\omega, \cdot) \rangle \dif t \biggr) \widetilde{k}(u'-u,v'-v) \dif u' \dif v'
                \\ &\qquad - \iint \biggl( \int_{t_\near}^{t_\far} \biggl( \int_{t}^{t_\far} T_{\sigma}(s;\gamma_{u',v'})
                \sigma(\gamma_{u',v'}(s))
                \langle [c(\gamma_{u',v'}(s))]_k, K_{\mathcal{H}^\mathbb{S}}((\gamma_{u',v'})_\omega, \cdot) \rangle
                \dif s
                \\ &\qquad + b_k T_{\sigma}(t_\far;\gamma_{u',v'})
            \biggr) \vec{\sigma}(\gamma_{u',v'}(t)) \dif t
        \biggr) \widetilde{k}(u'-u,v'-v) \dif u' \dif v' \Biggr).
        \\ &= \frac{1}{N} \sum_{i=1}^N \frac{1}{P} \sum_{j=1}^P \sum_{k=1}^3 ( \widetilde{\render}_{\sigma,c}(p_{i,j}) - Y_{i,j} )_k
            \\ &\qquad \iint \biggl(
            \int_{t_\near}^{t_\far}
                \langle [\vec{c}(\gamma_{u',v'}(t))]_k, T_{\sigma}(t;\gamma_{u',v'}) \sigma(\gamma_{u',v'}(t)) K_{\mathcal{H}^\mathbb{S}}((\gamma_{u',v'})_\omega, \cdot) \rangle
            \dif t \biggr) \widetilde{k}(u'-u,v'-v) \dif u' \dif v'
        \\ &\quad + \frac{1}{N} \sum_{i=1}^N \frac{1}{P} \sum_{j=1}^P \sum_{k=1}^3 ( \widetilde{\render}_{\sigma,c}(p_{i,j}) - Y_{i,j} )_k \Biggl(
            \\ &\qquad \iint \biggl(
            \int_{t_\near}^{t_\far}
                T_{\sigma}(t;\gamma_{u',v'}) \vec{\sigma}(\gamma_{u',v'}(t))
                \langle [c(\gamma(t))]_k, K_{\mathcal{H}^\mathbb{S}}((\gamma_{u',v'})_\omega, \cdot) \rangle \dif t \biggr) \widetilde{k}(u'-u,v'-v) \dif u' \dif v'
            \\ &\qquad - \iint \biggl( \int_{t_\near}^{t_\far} \bigl( \render_{\sigma,c}(\gamma_{u',v'}) - \render_{\sigma,c}(\gamma_{u',v'}^{[t_\near, t]})
            \bigr) \vec{\sigma}(\gamma_{u',v'}(t)) \dif t
        \biggr) \widetilde{k}(u'-u,v'-v) \dif u' \dif v' \Biggr).
    \end{align*}
    }
    Now, to help transform this into an $L^2$ inner product, let us introduce a lemma.

    \begin{lemma}
        For the $i$-th camera and any $g : \R^3 \to \R$,
        {\small
        \begin{align*}
            \iint \int_{t_\near}^{t_\far} g(\gamma_{u',v'}(t)) \dif t \, \widetilde{k}(u'-u,v'-v) \dif u' \dif v'
            = \frac{f_i^2}{\lvert \det A_i \rvert} \int_{\R^3} \frac{\ind[t(x; i) \in F]}{t(x; i)^2} g(x) \widetilde{k}(p(x; i)-p_{i,j}) \dif x,
        \end{align*}
        }
        where $F = [t_\near, t_\far]$, and $A_i, f_i$ are camera parameters.
    \end{lemma}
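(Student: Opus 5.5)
The identity is a change of variables from ray coordinates $(u',v',t)$ to world coordinates $x\in\R^3$. For camera $i$ the ray through image point $(u',v')$ is $\gamma_{u',v'}(t) = o_i + t A_i (u'/f_i, -v'/f_i, -1)$. I would therefore consider the map
\[ \Phi_i(u',v',t) := o_i + t A_i (u'/f_i, -v'/f_i, -1), \]
defined on $\R^2 \times F$, with $F=[t_\near,t_\far]$. Under this map the left-hand side becomes a single triple integral $\iiint g(\Phi_i(u',v',t))\,\widetilde{k}(u'-u,v'-v)\,\dif u'\dif v'\dif t$, which is legitimate by Fubini/Tonelli, assuming $g\,\widetilde{k}$ is integrable, e.g.\ $g$ bounded on compact $\Omega$ and $\widetilde{k}$ integrable. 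Here $(u,v)=p_{i,j}$.

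Next I would check that $\Phi_i$ is a bijection onto its image, with $t>0$. Inverting, $y := A_i^{-1}(x-o_i) = t(u'/f_i,-v'/f_i,-1)$. This gives $t(x;i) = -y_3$, and the pixel coordinates $p(x;i) = (u',v') = \bigl(f_i y_1/t(x;i),\, -f_i y_2/t(x;i)\bigr)$. These are exactly the functions $t(x;i)$ and $p(x;i)$ that appear in the statement. The image of $\R^2\times F$ is the set of $x$ with $t(x;i)\in F$, which accounts for the indicator.

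The key step is the Jacobian. Factor $\Phi_i$ as the affine map $y\mapsto o_i + A_i y$, contributing $\lvert\det A_i\rvert$, composed with $(u',v',t)\mapsto t(u'/f_i,-v'/f_i,-1)$. For the second map I would compute the $3\times3$ determinant directly. Its columns are $(t/f_i,0,0)$, $(0,-t/f_i,0)$ and $(u'/f_i,-v'/f_i,-1)$, so the matrix is upper triangular with absolute determinant $t^2/f_i^2$. Hence $\dif x = \lvert\det A_i\rvert\, t^2 f_i^{-2}\,\dif u'\dif v'\dif t$. Equivalently, $\dif u'\dif v'\dif t = \frac{f_i^2}{\lvert\det A_i\rvert\, t(x;i)^2}\dif x$.

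Substituting gives the stated formula. The arguments become $g(x)$ and $\widetilde{k}(p(x;i)-p_{i,j})$, the latter using the symmetric convention $\widetilde{k}(u'-u,v'-v)$ evaluated at $p(x;i)-p_{i,j}$, and the domain restriction becomes $\ind[t(x;i)\in F]$. The main obstacle is bookkeeping rather than depth: getting the Jacobian sign and scaling right, and confirming injectivity (which needs $t_\near>0$ so that $t\neq0$). I would also state explicitly the measurability and integrability assumptions that justify Fubini and the change of variables.
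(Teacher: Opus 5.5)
Your proposal is correct and follows essentially the same route as the paper: substitute $x = \Phi(u',v',t) = o_i + tA_iq_{u',v'}$, use $\lvert\det J\Phi\rvert = \lvert\det A_i\rvert\,t^2/f_i^2$ (the same upper-triangular factor), and read off $\ind[t(x;i)\in F]$ as the image of $\R^2\times F$. Your explicit remarks that injectivity needs $t_\near>0$ and that Fubini needs integrability of $g\,\widetilde{k}$ make precise what the paper takes for granted when it simply asserts that $\Phi$ is invertible.
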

    \begin{proof}[Proof (of the lemma)]
        First write $\gamma_{u',v'}(t) = o_i + t A_i q_{u',v'}$, for $q_{u',v'} = (u'/f_i, -v'/f_i, -1)$;
        so we have the triple integral
        \begin{align*}
            & \iint \int_{t_\near}^{t_\far} g(o_i + t A_i q_{u',v'}) \widetilde{k}(u'-u,v'-v) \dif t \dif u' \dif v'
            \\ &\quad = \iint \int_{t_\near}^{t_\far} g(o_i + t A_i q_{u',v'}) \widetilde{k}(p(o_i + t A_i q_{u',v'}; i)-p_{i,j}) \dif t \dif u' \dif v',
            \\ &\quad = \iint \int_{t_\near}^{t_\far} g(\Phi(u',v',t)) \widetilde{k}(p(\Phi(u',v',t); i)-p_{i,j}) \dif t \dif u' \dif v',
        \end{align*}
        where $p(\cdot; i)$ is such that $p(o + t A q_{u',v'}; i) = (u',v')$, and
        \[ \Phi\bigl(u, v, t\bigr) = o + t A q_{u,v} = o + t A (u/f, -v/f, -1). \]
        We will do a change-of-variables with $\Phi$.
        Note that it is invertible and differentiable, with Jacobian matrix as follows:
        \begin{align*}
            J\Phi\bigl(u, v, t\bigr) = A \begin{bmatrix}
                t/f & 0 & u/f
                \\
                0 & -t/f & -v/f
                \\
                0 & 0 & -1
            \end{bmatrix},
            \qquad
            \det\left(J\Phi\bigl(u, v, t\bigr)\right) = \det(A) \cdot \frac{t^2}{f^2}.
        \end{align*}
        Now, let $D = \R^2 \times [t_\near, t_\far]$.
        By integration by substitution,
        \begin{align*}
            & \iint \int_{t_\near}^{t_\far} g(\Phi(u',v',t)) \widetilde{k}(p(\Phi(u',v',t); i)-p_{i,j}) \dif t \dif u' \dif v'
            \\ &= \iiint_D g(\Phi(u',v',t)) \widetilde{k}(p(\Phi(u',v',t); i)-p_{i,j}) \dif t \dif u' \dif v'
            \\ &= \iiint_{\Phi^{-1}(\Phi(D))} g(\Phi(u',v',t)) \widetilde{k}(p(\Phi(u',v',t); i)-p_{i,j}) \dif t \dif u' \dif v'
            \\ &= \iiint_{\Phi(D)} g(\Phi(\Phi^{-1}(x))) \widetilde{k}(p(\Phi(\Phi^{-1}(x)); i)-p_{i,j}) \cdot \lvert \det J\Phi^{-1}(x) \rvert \dif x
            \\ &= \iiint_{\Phi(D)} g(x) \widetilde{k}(p(x; i)-p_{i,j}) \cdot \lvert \det J\Phi^{-1}(x) \rvert \dif x
        \end{align*}
        and by the inverse function theorem,
        \begin{align*}
            & \iiint_{\Phi(D)} g(x) \widetilde{k}(p(x; i)-p_{i,j}) \cdot \lvert \det J\Phi^{-1}(x) \rvert \dif x
            \\ &= \iiint_{\Phi(D)} g(x) \widetilde{k}(p(x; i)-p_{i,j}) \cdot \lvert \det J\Phi(\Phi^{-1}(x)) \rvert^{-1} \dif x
            \\ &= \iiint_{\Phi(D)} g(x) \widetilde{k}(p(x; i)-p_{i,j}) \cdot \lvert \det A \rvert^{-1} \frac{f^2}{\Phi^{-1}_t(x)^2} \dif x
            \\ &= \frac{f^2}{\lvert \det A \rvert} \iiint_{\Phi(D)} \frac{1}{\Phi^{-1}_t(x)^2} g(x) \widetilde{k}(p(x; i)-p_{i,j}) \dif x
            \\ &= \frac{f^2}{\lvert \det A \rvert} \iiint_{\Phi(D)} \frac{1}{t(x; i)^2} g(x) \widetilde{k}(p(x; i)-p_{i,j}) \dif x
            \\ &= \frac{f^2}{\lvert \det A \rvert} \int_{\R^3} \frac{\ind[t(x; i) \in F]}{t(x; i)^2} g(x) \widetilde{k}(p(x; i)-p_{i,j}) \dif x.
            \qedhere
        \end{align*}
    \end{proof}

    So, applying the lemma, we have that
    {\small
    \begin{align*}
        & \frac{1}{N} \sum_{i=1}^N \frac{1}{P} \sum_{j=1}^P \sum_{k=1}^3 \left[ ( \widetilde{\render}_{\sigma,c}(p_{i,j}) - Y_{i,j} )_k \right.
        \\ &\qquad \left. \iint \biggl(
            \int_{t_\near}^{t_\far}
                \langle [\vec{c}(\gamma_{u',v'}(t))]_k, T_{\sigma}(t;\gamma_{u',v'}) \sigma(\gamma_{u',v'}(t)) K_{\mathcal{H}^\mathbb{S}}((\gamma_{u',v'})_\omega, \cdot) \rangle
            \dif t \biggr) \widetilde{k}(u'-u,v'-v) \dif u' \dif v' \right]
        \\ &\quad + \frac{1}{N} \sum_{i=1}^N \frac{1}{P} \sum_{j=1}^P \sum_{k=1}^3 ( \widetilde{\render}_{\sigma,c}(p_{i,j}) - Y_{i,j} )_k \Biggl(
        \\ &\qquad \iint \biggl(
            \int_{t_\near}^{t_\far}
                T_{\sigma}(t;\gamma_{u',v'}) \vec{\sigma}(\gamma_{u',v'}(t))
                \langle [c(\gamma(t))]_k, K_{\mathcal{H}^\mathbb{S}}((\gamma_{u',v'})_\omega, \cdot) \rangle \dif t \biggr) \widetilde{k}(u'-u,v'-v) \dif u' \dif v'
            \\ &\qquad - \iint \biggl( \int_{t_\near}^{t_\far} \bigl( \render_{\sigma,c}(\gamma_{u',v'}) - \render_{\sigma,c}(\gamma_{u',v'}^{[t_\near, t]})
            \bigr) \vec{\sigma}(\gamma_{u',v'}(t)) \dif t
        \biggr) \widetilde{k}(u'-u,v'-v) \dif u' \dif v' \Biggr)
        \\ &= \frac{1}{N} \sum_{i=1}^N \frac{1}{P} \sum_{j=1}^P \sum_{k=1}^3 \left[ ( \widetilde{\render}_{\sigma,c}(p_{i,j}) - Y_{i,j} )_k \frac{f_i^2}{\lvert \det A_i \rvert} \right.
        \\ &\qquad \left. \int_{\R^3} \frac{\ind[t(x;i) \in F]}{t(x;i)^2}
                \langle [\vec{c}(x)]_k, T_{\sigma}(x;i) \sigma(x) K_{\mathcal{H}^\mathbb{S}}((\gamma_{u',v'})_\omega, \cdot) \rangle
            \widetilde{k}(p(x;i)-p_{i,j}) \dif x \right]
        \\ &\quad + \frac{1}{N} \sum_{i=1}^N \frac{1}{P} \sum_{j=1}^P \sum_{k=1}^3 ( \widetilde{\render}_{\sigma,c}(p_{i,j}) - Y_{i,j} )_k \Biggl(
        \\ &\qquad \frac{f_i^2}{\lvert \det A_i \rvert} \int_{\R^3} \frac{\ind[t(x;i) \in F]}{t(x;i)^2}
                T_{\sigma}(x;i) \vec{\sigma}(x)
                \langle [c(x)]_k, K_{\mathcal{H}^\mathbb{S}}((\gamma_{u',v'})_\omega, \cdot) \rangle \widetilde{k}(p(x;i)-p_{i,j}) \dif x
            \\ &\qquad - \frac{f_i^2}{\lvert \det A_i \rvert} \int_{\R^3} \frac{\ind[t(x;i) \in F]}{t(x;i)^2} \bigl( \render_{\sigma,c}(\gamma_{p(x;i)}) - \render_{\sigma,c}(\gamma_{p(x;i)}^{[t_\near, t]})
            \bigr) \vec{\sigma}(x)
        \widetilde{k}(p(x;i)-p_{i,j}) \dif x \Biggr)
        \\ &= \int_{\R^3} \frac{1}{N} \sum_{i=1}^N \frac{1}{P} \sum_{j=1}^P \sum_{k=1}^3 \left[ ( \widetilde{\render}_{\sigma,c}(p_{i,j}) - Y_{i,j} )_k \right.
        \\ &\qquad \left. \frac{f_i^2}{\lvert \det A_i \rvert} \frac{\ind[t(x;i) \in F]}{t(x;i)^2}
                \langle [\vec{c}(x)]_k, T_{\sigma}(x;i) \sigma(x) K_{\mathcal{H}^\mathbb{S}}((\gamma_{u',v'})_\omega, \cdot) \rangle
            \widetilde{k}(p(x;i)-p_{i,j}) \dif x \right]
        \\ &\quad + \int_{\R^3} \frac{1}{N} \sum_{i=1}^N \frac{1}{P} \sum_{j=1}^P \sum_{k=1}^3 ( \widetilde{\render}_{\sigma,c}(p_{i,j}) - Y_{i,j} )_k \Biggl(
        \\ &\qquad \frac{f_i^2}{\lvert \det A_i \rvert} \frac{\ind[t(x;i) \in F]}{t(x;i)^2}
                T_{\sigma}(x;i) \vec{\sigma}(x)
                \langle [c(x)]_k, K_{\mathcal{H}^\mathbb{S}}((\gamma_{u',v'})_\omega, \cdot) \rangle \widetilde{k}(p(x;i)-p_{i,j})
            \\ &\qquad - \frac{f_i^2}{\lvert \det A_i \rvert} \frac{\ind[t(x;i) \in F]}{t(x;i)^2} \bigl( \render_{\sigma,c}(\gamma_{p(x;i)}) - \render_{\sigma,c}(\gamma_{p(x;i)}^{[t_\near, t]})
            \bigr) \vec{\sigma}(x)
        \widetilde{k}(p(x;i)-p_{i,j}) \Biggr) \dif x
        \\ &= \int_{\R^3} \sum_{k=1}^3 \frac{1}{N} \sum_{i=1}^N \frac{f_i^2 \ind[t(x;i) \in F]}{P t(x;i)^2 \lvert \det A_i \rvert} \left[ \langle [\vec{c}(x)]_k, T_{\sigma}(x;i) \sigma(x) K_{\mathcal{H}^\mathbb{S}}((\gamma_{u',v'})_\omega, \cdot) \rangle \right.
            \\ &\qquad \left. \sum_{j=1}^P ( \widetilde{\render}_{\sigma,c}(p_{i,j}) - Y_{i,j} )_k \, \widetilde{k}(p(x;i)-p_{i,j}) \dif x \right]
        \\ &\quad + \int_{\R^3} \sum_{k=1}^3 \frac{1}{N} \sum_{i=1}^N \frac{f_i^2 \ind[t(x;i) \in F]}{P t(x;i)^2 \lvert \det A_i \rvert} \Biggl( T_{\sigma}(x;i) \vec{\sigma}(x)
                \langle [c(x)]_k, K_{\mathcal{H}^\mathbb{S}}((\gamma_{u',v'})_\omega, \cdot) \rangle
            \\ &\qquad - \bigl( \render_{\sigma,c}(\gamma_{p(x;i)}) - \render_{\sigma,c}(\gamma_{p(x;i)}^{[t_\near, t]}) \bigr) \vec{\sigma}(x)
        \Biggr) \sum_{j=1}^P ( \widetilde{\render}_{\sigma,c}(p_{i,j}) - Y_{i,j} )_k \widetilde{k}(p(x;i)-p_{i,j}) \dif x
        \\ &= \int_{\R^3} \sum_{k=1}^3 \left\langle [\vec{c}(x)]_k, \frac{1}{N} \sum_{i=1}^N \left[ \frac{f_i^2 \ind[t(x;i) \in F]}{P t(x;i)^2 \lvert \det A_i \rvert} T_{\sigma}(x;i) \sigma(x) K_{\mathcal{H}^\mathbb{S}}((\gamma_{u',v'})_\omega, \cdot) \right.\right.
            \\ &\qquad \left.\left. \sum_{j=1}^P ( \widetilde{\render}_{\sigma,c}(p_{i,j}) - Y_{i,j} )_k \, \widetilde{k}(p(x;i)-p_{i,j}) \right] \right\rangle \dif x
        \\ &\quad + \int_{\R^3} \vec{\sigma}(x) \sum_{k=1}^3 \frac{1}{N} \sum_{i=1}^N \frac{f_i^2 \ind[t(x;i) \in F]}{P t(x;i)^2 \lvert \det A_i \rvert} \Biggl( T_{\sigma}(x;i) [c(x, (\gamma_{u',v'})_\omega)]_k
            \\ &\qquad - \bigl( \render_{\sigma,c}(\gamma_{p(x;i)}) - \render_{\sigma,c}(\gamma_{p(x;i)}^{[t_\near, t]}) \bigr)
        \Biggr) \sum_{j=1}^P ( \widetilde{\render}_{\sigma,c}(p_{i,j}) - Y_{i,j} )_k \widetilde{k}(p(x;i)-p_{i,j}) \dif x.
    \end{align*}
    }
    And thus, the functional gradients are given by:
    {\small
    \begin{align*}
        \nabla_\sigma L(\sigma, c)(x) &= \sum_{k=1}^3 \frac{1}{N} \sum_{i=1}^N \frac{f_i^2 \ind[t(x;i) \in F]}{P t(x;i)^2 \lvert\det A_i\rvert} \biggl(
            T_\sigma(x; i) [c(x, \omega(x;i))]_k
            \\ &\qquad\qquad - \bigl( \render_{\sigma,c}(x; i) - \render^{[t_\near, t]}_{\sigma,c}(x; i) \bigr)
        \biggr) \sum_{j=1}^P ( \widetilde{\render}_{\sigma,c}(p_{i,j}) - Y_{i,j} )_k \, \widetilde{k}(p(x;i)-p_{i,j}),
        \\
        \nabla_{c_k} L(\sigma, c)(x, \omega) &= \frac{1}{N} \sum_{i=1}^N \left[ \frac{f_i^2 \ind[t(x;i) \in F]}{P t(x;i)^2 \lvert\det A_i\rvert} T_\sigma(x; i) \sigma(x) K_{\mathcal{H}^\mathbb{S}}(\omega(x; i), \omega) \right.
        \\ &\qquad\qquad\quad \left. \sum_{j=1}^P ( \widetilde{\render}_{\sigma,c}(p_{i,j}) - Y_{i,j} )_k \, \widetilde{k}(p(x;i)-p_{i,j}) \right],
    \end{align*}
    }
    and using $\widetilde{D}_{\sigma,c}(x; i,j) = \sum_{j=1}^P ( \widetilde{\render}_{\sigma,c}(p_{i,j}) - Y_{i,j} ) \, \widetilde{k}(p(x;i)-p_{i,j})$ and $a_i = (f_i^2 \ind[t(x;i) \in F])/(P t(x;i)^2 \lvert\det A_i\rvert)$ we obtain
    {\small
    \begin{align*}
        \nabla_\sigma L(\sigma, c)(x) &= \frac{1}{N} \sum_{i=1}^N a_i \sum_{k=1}^3 [\widetilde{D}_{\sigma,c}(x; i,j)]_k \biggl(
            T_\sigma(x; i) [c(x, \omega(x;i))]_k
            - \bigl( \render_{\sigma,c}(x; i) - \render^{[t_\near, t]}_{\sigma,c}(x; i) \bigr)
        \biggr),
        \\
        \nabla_{c_k} L(\sigma, c)(x, \omega) &= \frac{1}{N} \sum_{i=1}^N a_i [\widetilde{D}_{\sigma,c}(x; i,j)]_k T_\sigma(x; i) \sigma(x) K_{\mathcal{H}^\mathbb{S}}(\omega(x; i), \omega).
    \end{align*}
    }
    This establishes that $L$ is (linearly) Gâteaux differentiable.
    It is also immediately seen to be Fréchet differentiable, as the gradients are continuous with relation to $\sigma$ and $c$.
\end{proof}

\section{Experiment details}

\subsection{Regression in an RKHS}

For the experiments we use an RBF kernel $K(x, y) = \exp(-100 \lVert x - y \rVert^2)$.
We use a simple tree-based approximation scheme which naively partitions splits at the midpoints.
This is mainly for ease of (efficient) implementation in JAX, and in no way fundamental to our approach.
To upper bound the approximation error, we use the fact that our trees are piecewise constant over rectangular partitions of the space, and bound the sup-norm of each leaf via a Lipschitz bound on the gradient (using the fact that the RBF kernel is Lipschitz).
The neural network is a standard MLP with two hidden layers of size 256; we note that modifying this doesn't change the conclusions in any significant manner.

\subsection{Solving the wave equation}

The neural network baseline has a Fourier feature embedding layer~\citep{fourier-embedding}, which is essential for the neural network to even be capable of optimizing this loss; without this embedding, no matter how large (or small) the network or optimizer hyperparameters, it stays stuck and does not fit the solution at all.
As the loss has an integral in its definition, the neural network optimizes it on batches of 4096 points uniformly sampled over the domain; lowering this batch size leads to worse results.
For our method, the inverse Fourier transform can be directly computed as a sum of sincs, using the fact that a box function in frequency space corresponds to a sinc in the primary space.
Finally, we bound the approximation error via a combination of a Lipschitz argument (for the interior of the frequency-space grid) and a bound on the Sobolev symbol for the content outside of the grid.
For the reference solution in the figure, we solve the equation using a finite differences method with an extremely fine grid.

\subsection{Learning radiance fields}

We train our method on 24 $160 \times 160$ RGB images, and evaluate on 25 $160 \times 160$ RGB images of novel views on the test subset.
Both Approx. FGD (all resolutions) and Adaptive FGD used identical optimization hyperparameters.
They employed a learning rate of 8 for the color function $c$ and 20 for the density function $\sigma$, approximated the gradient by taking 8 samples per voxel and averaging them for the density, and fitting spherical harmonics coefficients through one step of Newton's method.
On the other hand, the NN used Adam optimizer with learning rate of $1e-4$ and $\beta = (0.9, 0.999)$.
Since the NN cannot perform full-batch gradient descent due to VRAM limitations, we used mini-batches of $80\times 80$ rays and counted $4 \times 24$ mini-batches as an optimization step (to match the full-batch updates of our method).
Note that, since the NN traces one ray per pixel, $4 \times 24$ mini-batches correspond exactly to the number of rays in the full training data.

\end{document}